\documentclass[11pt]{article}

\usepackage{geometry}
\usepackage{amsmath,amsthm,amssymb,latexsym,amstext,amsfonts}
\usepackage{pst-node}
\usepackage{caption}
\usepackage{mathdots}
\usepackage{braket}
\usepackage{graphicx,pgf}
\usepackage{tikz}
\usetikzlibrary{snakes}
\usetikzlibrary{arrows}
\usepackage{epic}

\newtheorem{proposition}{Proposition}
\newtheorem{theorem}[proposition]{Theorem}

\newtheorem{lemma}[proposition]{Lemma}
\newtheorem{remark}[proposition]{Remark}
\newtheorem{definition}[proposition]{Definition}
\newtheorem{example}[proposition]{Example}

\def \RR{{\mathcal R}}

\def \LL{{\mathcal L}}
\def \DD{{\mathcal D}}

\def \EE{{\mathcal E}}

\def \CC{{\mathcal C}}

\newcommand{\Drel}{\mathbin{\mathcal D}}

\setcounter{secnumdepth}{2}
\bibliographystyle{alpha.bst}

\title{\emph{ON THE COSET CATEGORY \\ OF A SKEW LATTICE}}

\author{
Jo\~ao Pita Costa \\ 
Institut Jo\v zef Stefan,\\
Jamova Cesta 39, 1000 Ljubljana, Slovenia\\
joao.pitacosta@ijs.si
}
\date{\today}


\begin{document}

\maketitle

\begin{abstract}
Skew lattices are noncommutative generalizations of lattices. 
The coset structure decomposition is an original approach to the study of these algebras describing the relation between its rectangular classes. 
In this paper we will look at the category determined by these rectangular algebras and the morphisms between them, showing that not all skew lattices can determine such a category.
Furthermore, we will present a class of examples of skew lattices in rings that are not strictly categorical, and present sufficient conditions for skew lattices of matrices in rings to constitute $\wedge$-distributive skew lattices.
\bigskip

\noindent \emph{Keywords:} noncommutative lattice, skew lattice, band of semigroups, Green's relations, coset structure, regularity, coset category.\medskip

\noindent \emph{2000 Mathematics Subject Classification:}
Primary: 06A11; Secondary: 06F05.

\end{abstract}

\newpage

 

\section*{Introduction}

Skew lattices are one of the most successful generalizations of lattices, being non commutative but maintaining associativity, idempotency and four of the several possible absorption laws. The chosen absorption laws permit us to generalize several lattice theoretic concepts as is the case of distributivity, studied in \cite{Le89} or \cite{Le13}. The order structure of skew lattices finds a close relation to the order structure of its corresponding lattice.
These algebras can also be seen as double bands due to the fact that their reducts $(S;\wedge )$ and $(S;\vee)$ are regular semigroups of idempotents.
Green's relations take an important role in this research. In particular, $\DD$ is a congruence determining a decomposition deriving from Clifford-MacLean's result, that permits us to look at a skew lattice as a lattice of maximal rectangular algebras.  
  
The study of the coset structure of a skew lattice explores the interplay between related $\DD$-classes. It is an approach that has no counterpart in Semigroup Theory or in Lattice Theory. In the lattice case, the $\DD$-classes reduce to singletons, while in the case of bands those classes have a known impact in the study of the corresponding semi-lattice. This makes the approach a relevant method to study skew lattices, specifically.
Under certain conditions, such an algebra permits the construction of a category that has the $\DD$-classes as objects and the coset bijections between them as morphisms.
This paper explores questions posed in \cite{AAA80} regarding such a category, named coset category. In this paper we show that all cosets are rectangular subalgebras and that all coset bijections between them are in fact isomorphisms. Those isomorphisms describe the order structure of a skew lattice, as seen in \cite{AAA80}.  
This study of the characterization of several subvarieties of skew lattices by identities involving cosets, named \textit{coset laws}, started in \cite{Le93} and was continued in  \cite{Ka05} and in \cite{Co09a} . 

When considering a ring $R$, the operations defined by $x\wedge y=xy$ and $x\vee y=x+y-xy$ succeeded in providing a rather large class of examples of skew lattices which have motivated many of the properties studied in the general case. When $E(\mathbf{R})$ is  the set of all idempotent elements in a ring $\mathbf{R}$ and $S\subseteq E(\mathbf{R})$ is closed under both $\vee $ and $\wedge $, $(S;\wedge , \vee )$ is a skew lattice. Skew lattices in rings, together with skew Boolean algebras, constitute the largest classes of studied examples of skew lattices to date. 
Much has been done also in the particular case of skew lattices in rings of matrices by Cvetko-Vah and Leech in \cite{Ka05}, \cite{Ka05c} and \cite{Ka07} following the work of Radjavi and other authors on bands of matrices (see \cite{Fi94} and \cite{Fi99}).
It was shown in \cite{Ka05} that skew lattices in rings are not normal.
In the last section of this paper we will look at the coset category of skew lattices of matrices in rings and show that, in general, skew lattices in rings are also not strictly categorical. 
We will also present sufficient conditions for skew lattices of matrices in rings to constitute strictly categorical skew lattices and $\wedge$-distributive skew lattices.

This paper will be using: basic knowledge of Lattice Theory that can be retrieved in \cite{Ba67} dealing with lattice notions in a noncommutative context; several issues and results deriving from Semigroup Theory, that can be found in \cite{Ho76}, having in mind that we are dealing with bands of semigroups; and some Category Theory language, that can be revisited in \cite{La98}.

\section{Preliminaries}

A skew lattice is a set $S$ with binary operations $\wedge $ and $\vee $ that are both idempotent and associative, satisfying the absorption laws $x\wedge (x\vee y)=x=(y\vee x)\wedge x$ and their duals. 
 A \emph{band} is a semigroup of idempotents. 
Recall that a band is \emph{regular} if it satisfies $xyxzx=xyzx$, is \emph{normal} if it satisfies $xyzw=xzyw$, and is \emph{rectangular} if it satisfies $xyx=x$.
Any skew lattice $\mathbf S$ can be seen as double regular band by considering the band reducts $(S,\wedge )$ and $(S,\vee )$.
If these bands are rectangular we say that the skew lattice $\mathbf S$ is \emph{rectangular}. 
On the other hand, \emph{normal} skew lattices are the ones for which $(S;\wedge)$ is a normal band, and \emph{conormal} skew lattices are the ones for which $(S;\vee)$ is a normal band.  
A skew lattice is \emph{symmetric} whenever $x\wedge y= y\wedge x$ if and only if $x\vee y= y\vee x$.

Green's relations are five equivalence relations, introduced in  \cite{Gr51}, characterizing the elements of a semigroup in terms of the principal ideals they generate. 
Due to the absorption dualities, the Green's relations in the context of skew lattices are defined in  \cite{Le89}  by $\RR=\RR_{\wedge}=\LL_{\vee}$, $\LL=\LL _{\wedge}=\RR_{\vee}$ and $\DD =\DD _{\wedge}=\DD_{\vee}$. 
Right-handed skew lattices are the skew lattices for which $\RR =\DD$ while left-handed skew lattices are determined by $\LL =\DD$.

Two distinct concepts of order can be considered in a skew lattice $\mathbf{S}$: 
the \emph{natural partial order} defined by $x\geq y$ if $x\wedge y=y=y\wedge x$ or, dually, $x\vee y = x = y\vee x$; 
the \emph{natural preorder} defined by $x\succeq y$ if $y\wedge x\wedge y = y$ or, dually, $x\vee y\vee x = x$. 
Observe that $x\Drel y$ iff $x\succeq y$ and $y\succeq x$. 
Usually $\DD $ is referred in the available literature as the \emph{natural equivalence}. 

The fact that $\DD$ can be expressed by the natural preorder $\preceq$ allows us to draw diagrams, based on the Caley tables of the corresponding operations, that are capable of representing skew lattices as the one in Figure \ref{nonstcat}. 
An \emph{admissible Hasse diagram} of (a subset of) a skew lattice is a Hasse diagram for the natural partial order (usually represented by full edges) together with an indication of all $\DD $-congruent elements (usually represented by dashed edges). 
Unlike lattices, one such diagram can represent two distinct skew lattices (cf. \cite{Le89}).

Whenever $\mathbf{S}$ is a skew lattice, $\DD $ is a congruence, $\mathbf{S}/\DD $ is the maximal lattice image of $\mathbf{S}$ and all congruence classes of $\DD $ are maximal rectangular skew lattices in $\mathbf{S}$ (cf. \cite{Le89}). Thus, the functor $S \mapsto S/\DD$ is a reflection of skew lattices into ordinary lattices. Hence, to the lattice image $S/\DD$ we now call lattice reflection.

\begin{figure}
\begin{center}
$\begin{array}{lcr}
&
\begin{pspicture}(-2,-2)(2,2)
\psline[linewidth=0.5 pt,linestyle=dashed]{*-*}(1,0)(-1,0)
\psline[linewidth=0.5pt]{*-*}(-1,0)(0,1)
\psline[linewidth=0.5pt]{*-*}(1,0)(0,1)
\psline[linewidth=0.5pt]{*-*}(1,0)(0,-1)
\psline[linewidth=0.5 pt](-1,0)(0,-1)
\uput[180](-1,0){$2$}
\uput[1](1,0){$3$}
\uput[90](0,1){$1$}
\uput[270](0,-1){$0$}
\end{pspicture}
&
\\
\begin{tabular}{ l | c c c c }
  $\wedge$ & $0$ & $2$ & $3$ & $1$ \\
  \hline
   $0$ & $0$ & $0$ & $0$ & $0$ \\
  $2$ & $0$ & $2$ & $3$ & $2$ \\
  $3$ & $0$ & $2$ & $3$ & $3$  \\
  $1$ & $0$ & $2$ & $3$ & $1$
\end{tabular}
&
&
\begin{tabular}{ l | c c c c }
  $\vee$ & $0$ & $2$ & $3$ & $1$ \\
  \hline
  $0$ & $0$ & $2$ & $3$ & $1$ \\
  $2$ & $2$ & $2$ & $2$ & $1$ \\
  $3$ & $3$ & $3$ & $3$ & $1$  \\
  $1$ & $1$ & $1$ & $1$ & $1$
\end{tabular}
\end{array}$
\end{center}
\caption{The Caley tables and the admissible Hasse diagram of a right-handed skew lattice.}\label{nonstcat}
\end{figure}


\section{On the coset structure}
In the following section we shall discuss some aspects of the coset structure of a skew lattice, introduced in \cite{Le93}, and further developed in \cite{AAA80} and \cite{JPC12}.
Recall that a \emph{chain} (or \emph{totally ordered set}) is a set where each two elements are (order) related, and an \emph{antichain} is a set where no two elements are (order) related. 
We call $\mathbf S$ a \emph{skew chain} whenever $S/\DD $ is a chain. 
All $\DD $-classes are antichains for the partial order and chains for the preorder (cf. \cite{Le89}). 

Consider a skew lattice $\mathbf{S}$ consisting of exactly two $\DD$-classes $A>B$. 
Given $b\in B$, the subset $A\wedge b\wedge A=\{a\wedge b\wedge a \,|\, a\in A\}$ of $B$ is said to be a \emph{coset} of $A$ in $B$ (or an \emph{$A$-coset in $B$}). 
Similarly, a coset of $B$ in $A$ (or a $B$-coset in $A$) is any subset $B\vee a\vee B =\{b\vee a\vee b \,|\, b\in B \}$ of $A$, for a fixed $a\in A$.  
On the other hand, given $a\in A$, the \emph{image set} of $a$ in $B$ is the set $a\wedge B\wedge a = \set{a \wedge b\wedge a\,|\,b\in B}=\set{b\in B\,|\,b< a}.$ 
Dually, given $b\in B$ the set $b\vee A\vee b = \set{a\in A:b<a}$ is the image set of $b$ in $A$.  

\begin{theorem} \cite{Le93}  \label{coset_part}
Let $\mathbf S$ be a skew lattice with comparable $\DD$-classes $A>B$. 
Then, $B$ is partitioned by the cosets of $A$ in $B$ and the image set of any element $a\in A$ in $B$ is a transversal of the cosets of $A$ in $B$; dual remarks hold for any $b\in B$ and the cosets of $B$ in $A$ that determine a partition of $A$. 
Moreover, any coset $B\vee a\vee B$ of $B$ in $A$ is isomorphic to any coset $A\wedge b\wedge A$ of $A$ in $B$ under a natural bijection $\varphi $ defined implicitly for any $a\in A$ and $b\in B$ by: $x\in B\vee a\vee B$ corresponds to $y\in A\wedge b\wedge A$ if and only if $x\geq y$. 
Furthermore, the operations $\wedge$ and $\vee$ on $A\cup B$ are determined jointly by the coset bijections and the rectangular structure of each $\DD$-class.
\end{theorem}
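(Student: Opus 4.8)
The plan is to work inside the primitive skew lattice $A\cup B$ and reduce the four claims to two combinatorial facts about the family $\{A\wedge b\wedge A:b\in B\}$: that it is a partition, and that each image set is a transversal of it. First I would record the order dictionary. For $a\in A$ and $b\in B$, idempotency and absorption give $a\wedge b\wedge a\le a$, so the image set $a\wedge B\wedge a=\{x\in B:x\le a\}$ is the down-set of $B$ below $a$, and dually $b\vee A\vee b=\{x\in A:x\ge b\}$. Coverage of $B$ by cosets is then immediate: given $b$, the element $a:=b\vee a_0\vee b\in A$ (for any fixed $a_0\in A$) satisfies $a\vee b=a=b\vee a$, so $b\le a$ and hence $a\wedge b\wedge a=b\in A\wedge b\wedge A$; the dual statement produces a lower bound in $B$ for every $a\in A$.

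The decisive point, which I expect to be the main obstacle, is that a coset is independent of the representative chosen inside it; equivalently,
\[ L\wedge b\wedge R\in A\wedge b\wedge A \qquad\text{for all } L,R\in A,\ b\in B. \]
This one lemma does all the heavy lifting: it forces the relation ``$b'\equiv b$ iff $b'\in A\wedge b\wedge A$'' to be transitive, so that its classes are exactly the sets $A\wedge b\wedge A$; it forces two cosets sharing a point to coincide; and it shows each coset is closed under $\wedge$ and $\vee$, hence is a rectangular subalgebra of $B$. To prove it I would substitute a fixed-point form $b=u\wedge b\wedge u$ for an upper bound $u\ge b$ and then use the regular band identity $xyxzx=xyzx$ on $(S,\wedge)$ together with the rectangular collapse $a\wedge a'\wedge a=a$ in $A$ to absorb the asymmetric flanks $L,R$ into a single two-sided factor. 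The obstruction is precisely this asymmetry, and the cleanest route is to verify the identity first in the left-handed and right-handed cases, where image sets are single $\RR$- or $\LL$-classes and the statement is transparent, and then lift it through the subdirect embedding of $S$ into the fibre product of its right-handed reflection $S/\LL$ and left-handed reflection $S/\RR$ over $S/\DD$ (cf. \cite{Le89}).

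With the lemma in hand the partition and transversal assertions fall out. Coverage plus disjointness gives that the cosets partition $B$, and the dual argument partitions $A$. For the transversal property, $a\wedge b\wedge a$ exhibits a common point of $(A\wedge b\wedge A)\cap(a\wedge B\wedge a)$, so the image set of $a$ meets every coset; uniqueness is the clean computation that I would carry out as follows. If $y_1,y_2\le a$ lie in one coset, write $y_2=a'\wedge y_1\wedge a'$ and use $y_1=a\wedge y_1\wedge a$ to obtain $a\wedge y_2\wedge a=(a\wedge a'\wedge a)\wedge y_1\wedge(a\wedge a'\wedge a)=a\wedge y_1\wedge a=y_1$, while $a\wedge y_2\wedge a=y_2$ because $y_2\le a$; hence $y_1=y_2$. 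Thus each image set meets each coset in exactly one point, and dually for $B$-cosets in $A$.

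Finally I would build $\varphi$ and reconstruct the operations. Fix $a\in A$, $b\in B$, and the cosets $P=B\vee a\vee B\subseteq A$ and $Q=A\wedge b\wedge A\subseteq B$; note $a\in P$ and $b\in Q$. Define $\varphi\colon P\to Q$ by sending $x$ to the unique $y\in Q$ with $y\le x$, which is well defined because the image set $x\wedge B\wedge x$ meets $Q$ in exactly one point by the transversal property; the dual rule $y\mapsto$ the unique $x\in P$ above $y$ is a two-sided inverse, so $\varphi$ is the bijection of the statement. To see it is an isomorphism I would use that $P$ and $Q$ are rectangular subalgebras (from the lemma) and that $\varphi(x_1)\wedge\varphi(x_2)$ and $\varphi(x_1\wedge x_2)$ are both the unique element of $Q$ below $x_1\wedge x_2$, with the analogous check for $\vee$, using compatibility of $\le$ with the operations. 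For the reconstruction, given $a\in A$ and $b\in B$ the element $d:=a\wedge b\wedge a$ is exactly $\varphi(a)$, the representative of $b$'s coset lying below $a$, so it is read off from the coset bijection; then $a\wedge b$ and $b\wedge a$ are pinned down inside the rectangular class $B$ by the regular identities $(a\wedge b)\wedge d=d$, $(a\wedge b)\wedge b=a\wedge b$ and $d\wedge(b\wedge a)=d$, $b\wedge(b\wedge a)=b\wedge a$, which in the coordinates $B\cong (B/\RR)\times(B/\LL)$ say that $a\wedge b$ carries the $\RR$-coordinate of $d$ with the $\LL$-coordinate of $b$, and $b\wedge a$ the $\RR$-coordinate of $b$ with the $\LL$-coordinate of $d$. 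The dual computation recovers $a\vee b$ and $b\vee a$ from $b\vee a\vee b$ and the $B$-cosets in $A$, so $\wedge$ and $\vee$ on $A\cup B$ are determined jointly by the coset bijections and the rectangular structure of $A$ and $B$.
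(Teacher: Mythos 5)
Your proposal cannot be checked against an in-paper argument: Theorem \ref{coset_part} is quoted from \cite{Le93} and the paper supplies no proof, so what you have written is a reconstruction of Leech's original argument. The architecture you choose (order dictionary, coverage, a well-definedness lemma, transversal uniqueness, the bijection as ``unique element below'', reconstruction of $\wedge$ and $\vee$ from cosets plus rectangular coordinates) is the standard and correct one, and the paper itself re-derives several of your intermediate steps: Proposition \ref{strg_prop} is your well-definedness statement, Proposition \ref{cosetrect} is the isomorphism claim, and Proposition \ref{newcong} is the coset-partition-as-congruence claim.

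Two points need tightening. First, your key lemma $L\wedge b\wedge R\in A\wedge b\wedge A$ yields reflexivity (via coverage) and transitivity of the relation ``$b'\equiv b$ iff $b'\in A\wedge b\wedge A$'', but not symmetry, so on its own it does not force two cosets sharing a point to coincide. Symmetry is recovered only by combining the lemma with your uniqueness-below-$a$ computation: if $b'=a\wedge b\wedge a$ and $u\geq b$, then $u\wedge b'\wedge u$ and $b$ both lie below $u$ and $u\wedge b'\wedge u\in A\wedge b\wedge A$ by the lemma, whence $u\wedge b'\wedge u=b$ and so $b\in A\wedge b'\wedge A$. Thus the uniqueness computation must precede the partition claim, whereas you present it afterwards; the dependency is circular in appearance (though not in substance) as written. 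Equivalently, what is really needed is the two-sided identity $x\wedge(a\wedge b\wedge a)\wedge x=x\wedge b\wedge x$ for all $x,a\in A$ --- exactly the content of Proposition \ref{strg_prop} --- and your proposed technique (verify in the one-sided reflections, where the $\DD$-classes are left- or right-zero bands and the identity collapses, then lift through $S\hookrightarrow S/\RR\times_{S/\DD}S/\LL$) does prove it; you just stopped short of stating the identity you need. Second, in the isomorphism step the appeal to ``compatibility of $\leq$ with the operations'' is not a valid general principle in skew lattices. The inequality $\varphi(x_1)\wedge\varphi(x_2)\leq x_1\wedge x_2$ is true in this configuration, but it must be verified by a regularity/rectangularity computation; the cleaner route is the explicit formula $\varphi(x)=x\wedge b\wedge x$ followed by the direct calculation, which is precisely how the paper's Proposition \ref{cosetrect} establishes that coset bijections are isomorphisms.
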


\begin{proposition}\label{strg_prop} \cite{Co09a} 
Let $\mathbf{S}$ be a skew lattice with comparable $\DD$-classes $A>B$
and let $y,y'\in B$. The following are equivalent:
\begin{itemize}
\item[(i)] $A\wedge y \wedge A = A\wedge y' \wedge A$;
\item[(ii)] for all $x\in A$, $x\wedge y\wedge x = x\wedge y'\wedge x$;
\item[(iii)] there exists $x\in A$ such that $x\wedge y\wedge x = x\wedge y'\wedge x$.
\end{itemize}

Dual results hold, having a similar statement.
\end{proposition}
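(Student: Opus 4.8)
The plan is to establish the cyclic chain of implications (i) $\Rightarrow$ (ii) $\Rightarrow$ (iii) $\Rightarrow$ (i), drawing throughout on the two structural facts packaged in Theorem~\ref{coset_part}: that the cosets of $A$ in $B$ partition $B$, and that the image set $x\wedge B\wedge x$ of any $x\in A$ is a transversal of those cosets. The implication (ii) $\Rightarrow$ (iii) is immediate, since a $\DD$-class is nonempty and (ii) is just a universally quantified form of the existential (iii). For (iii) $\Rightarrow$ (i), I would let $z:=x\wedge y\wedge x=x\wedge y'\wedge x$ denote the common value and note that it witnesses membership in two cosets at once: writing $z=x\wedge y\wedge x$ gives $z\in A\wedge y\wedge A$, while writing $z=x\wedge y'\wedge x$ gives $z\in A\wedge y'\wedge A$. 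Since the $A$-cosets in $B$ form a partition, two of them that share an element must coincide, so $A\wedge y\wedge A=A\wedge y'\wedge A$.

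The substantive step is (i) $\Rightarrow$ (ii), and here the transversal property does the work. I would fix an arbitrary $x\in A$ and set $K:=A\wedge y\wedge A=A\wedge y'\wedge A$ (the common coset granted by (i)). Both $x\wedge y\wedge x$ and $x\wedge y'\wedge x$ lie in the image set $x\wedge B\wedge x$, because $y,y'\in B$; and both lie in $K$, because $x\in A$ together with $A\wedge y\wedge A=K=A\wedge y'\wedge A$. But a transversal meets each coset in exactly one point, so the image set of $x$ contains a single element of $K$. Consequently $x\wedge y\wedge x=x\wedge y'\wedge x$, and since $x\in A$ was arbitrary, (ii) follows.

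The only place where any care is needed is in confirming that $x\wedge y\wedge x$ and $x\wedge y'\wedge x$ genuinely sit inside the relevant cosets and image sets, but this is a direct unwinding of the definitions of $A\wedge y\wedge A=\{a\wedge y\wedge a\,|\,a\in A\}$ and $x\wedge B\wedge x$; no deeper obstacle arises, since the partition and transversal structure are supplied wholesale by Theorem~\ref{coset_part}. The dual statement is then obtained by the usual exchange, replacing $\wedge$ by $\vee$ and interchanging the roles of $A$ and $B$, working instead with the cosets of $B$ in $A$ and the image set $b\vee A\vee b$.
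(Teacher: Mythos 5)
Your argument is correct. Note that the paper itself states this proposition without proof, citing \cite{Co09a}, so there is no in-paper argument to compare against; judged on its own, your cyclic derivation is sound and self-contained given Theorem~\ref{coset_part}. The two facts you lean on are exactly the right ones: the partition property makes (iii)~$\Rightarrow$~(i) immediate (the element $x\wedge y\wedge x=x\wedge y'\wedge x$ witnesses that the cosets $A\wedge y\wedge A$ and $A\wedge y'\wedge A$ meet, hence coincide), and the transversal property of the image set $x\wedge B\wedge x$ forces $x\wedge y\wedge x$ and $x\wedge y'\wedge x$, which both lie in that image set and in the common coset, to be the same point, giving (i)~$\Rightarrow$~(ii). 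This structural route avoids the direct computations with the regularity identity that one would otherwise use, at the cost of invoking the full strength of Theorem~\ref{coset_part}; since that theorem is established prior to and independently of the proposition, no circularity arises. The dual statement follows as you say by exchanging $\wedge$ with $\vee$ and the roles of $A$ and $B$.
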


\begin{lemma}\label{rect} 
Let $\mathbf S$ be a skew lattice.
Then, the rectangularity of $\wedge$ (dually, of $\vee$) implies the rectangularity of $\mathbf S$.
Moreover, it is equivalent to the validity of the identity $x\wedge y = y\vee x$. 
\end{lemma}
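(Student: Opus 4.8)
The statement packages two assertions: that rectangularity of $\wedge$ forces $\mathbf S$ to be rectangular, and that this is equivalent to the single identity $x\wedge y=y\vee x$. The plan is to prove the equivalence first, since the implication then falls out for free, and to treat only the $\wedge$-case, the $\vee$-version being obtained by the standard $\wedge\leftrightarrow\vee$ duality of skew lattices. Throughout I will use the absorption laws $x\wedge(x\vee y)=x=(y\vee x)\wedge x$ together with their duals $x\vee(x\wedge y)=x=(y\wedge x)\vee x$.

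The direction from the identity to rectangularity is routine. Assuming $x\wedge y=y\vee x$, I would substitute it into the absorptions: $x\wedge y\wedge x=(y\vee x)\wedge x=x$ shows $(S,\wedge)$ is rectangular, while $x\vee y\vee x=x\vee(y\vee x)=x\vee(x\wedge y)=x$ shows $(S,\vee)$ is rectangular, so $\mathbf S$ is rectangular. This simultaneously yields the implication claimed in the lemma once the reverse direction is in hand.

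The substance is the converse: from $x\wedge y\wedge x=x$ alone I must recover $x\wedge y=y\vee x$. First I would record the familiar consequence that a rectangular band collapses middle factors, $u\wedge v\wedge w=u\wedge w$ (provable by writing $w=w\wedge u\wedge w$ and absorbing). The key is then to evaluate the single expression $(a\vee b)\wedge a$ in two ways. On one hand, rectangularity gives $(a\vee b)\wedge a\wedge(a\vee b)=a\vee b$, and regrouping together with the absorption $a\wedge(a\vee b)=a$ turns the left side into $(a\vee b)\wedge a$, so $(a\vee b)\wedge a=a\vee b$. On the other hand, inserting $b$ as a middle factor and collapsing gives $(a\vee b)\wedge a=(a\vee b)\wedge b\wedge a$, and since $(a\vee b)\wedge b=b$ by absorption this equals $b\wedge a$. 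Equating the two evaluations yields $a\vee b=b\wedge a$, which after renaming is exactly $x\wedge y=y\vee x$.

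I expect the main obstacle to be this converse, and specifically the discovery of $(a\vee b)\wedge a$ as the right object to compute: both the choice to sandwich $a\vee b$ around $a$ and the choice to insert $b$ are what make the two absorption laws bite, and without the middle-collapse identity the second evaluation does not close. Once these two evaluations are available the conclusion is immediate, and the first half of the lemma follows by combining the two directions.
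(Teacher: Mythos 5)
Your proof is correct and follows essentially the same route as the paper: the central step in both is to evaluate $(x\vee y)\wedge x$ in two ways (once via absorption and the rectangular collapse to get $x\vee y$, once by inserting the middle factor $y$ to get $y\wedge x$), yielding the identity $x\wedge y=y\vee x$, from which rectangularity of the other operation follows. Your version is marginally more complete in that it explicitly records the easy direction (identity $\Rightarrow$ rectangularity) and the equivalence of $x\wedge y\wedge x=x$ with the middle-collapse law $x\wedge y\wedge z=x\wedge z$, which the paper takes as the definition without comment.
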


\begin{proof}
Let $\mathbf S$ be a skew lattice and $x,y,z\in S$. 
Assuming the rectangularity of $\wedge$ (i.e. $x\wedge y\wedge z=x\wedge z$) we get
\[
(x\vee y)\wedge x 	= (x\vee y)\wedge y\wedge x =y\wedge x
\]
where the first equality issue to the assumption and the second to absorption, and also
\[
(x\vee y)\wedge x 	= (x\vee y)\wedge x \wedge (x\vee y) = (x\vee y)\wedge (x\vee y) = x\vee y  
\]
where again the first equality is due to absorption, the second is due to the assumption, and the third follows by idempotency.
Thus, we get that 
\[
x\vee y\vee z 	 = x\vee (z\wedge y) = z\wedge y\wedge x  = z\wedge x = x\vee z 
\]
The proof that the rectangularity of $\vee $ implies the rectangularity of the skew lattice is now similar. 
\end{proof}

Given two partially ordered sets $(S, \leq)$ and $(T, \leq)$, a function $f: S \longrightarrow T$ is an order-embedding if $f$ is both order-preserving and order-reflecting, i.e. for all $x,y \in S$, $x\leq y$ if and only if $f(x)\leq f(y)$.
As for lattices, an order isomorphism can be characterized as a surjective order-embedding. 
Any order-embedding $f$ restricts to an isomorphism between its domain $S$ and its range $f(S)$. 

\begin{lemma}\label{recthom}
Let $\mathbf S$ be a skew lattice with $\DD$-classes $A$ and $B$. 
A map $\phi:A\rightarrow B$ is a $\wedge$-homomorphism if and only if it is a $\vee$-homomorphism.
\end{lemma}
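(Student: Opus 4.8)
The plan is to exploit the fact that both $\DD$-classes $A$ and $B$ are maximal rectangular skew lattices, as recalled in the Preliminaries through the Clifford--MacLean decomposition. On a rectangular skew lattice, Lemma~\ref{rect} tells us that $\wedge$ and $\vee$ are tied together by the identity $x\wedge y = y\vee x$, or equivalently $x\vee y = y\wedge x$. This single identity is exactly what will let me collapse the two homomorphism conditions into one another, so the whole proof reduces to rewriting joins as meets (and conversely) on either side of $\phi$.

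First I would fix the setting. Since $A$ is a $\DD$-class, it is rectangular, so for all $x,y\in A$ we have $x\vee y = y\wedge x$; and since $B$ is rectangular, for all $u,v\in B$ we have $u\wedge v = v\vee u$. These two instances of Lemma~\ref{rect}, one in the domain and one in the codomain, are the only structural inputs needed.

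Next, assuming $\phi$ is a $\wedge$-homomorphism, I would compute $\phi(x\vee y)$ for arbitrary $x,y\in A$ by first rewriting the join inside $A$ as a meet, then applying the $\wedge$-homomorphism property, and finally rewriting the resulting meet inside $B$ back as a join:
\[
\phi(x\vee y) = \phi(y\wedge x) = \phi(y)\wedge \phi(x) = \phi(x)\vee\phi(y).
\]
This establishes that $\phi$ is a $\vee$-homomorphism. The converse direction is entirely symmetric: assuming $\phi$ is a $\vee$-homomorphism, one rewrites a meet in $A$ as a join, applies the $\vee$-homomorphism property, and rewrites back, obtaining $\phi(x\wedge y)=\phi(x)\wedge\phi(y)$.

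I do not anticipate a genuine obstacle here; the computation is a three-step rewriting together with its mirror image. The only point that requires a word of justification is that $A$ and $B$ are indeed rectangular, so that Lemma~\ref{rect} applies to both---and this is immediate because every $\DD$-class of a skew lattice is a maximal rectangular subalgebra. Once that observation is in place, the equivalence of the two homomorphism notions is forced by the identity $x\wedge y=y\vee x$ holding simultaneously in $A$ and in $B$.
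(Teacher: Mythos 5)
Your proposal is correct and follows essentially the same route as the paper: the identical three-step chain $\phi(x\vee y)=\phi(y\wedge x)=\phi(y)\wedge\phi(x)=\phi(x)\vee\phi(y)$, justified by the rectangularity of the $\DD$-classes $A$ and $B$ via Lemma~\ref{rect}, with the converse by symmetry. No gaps.
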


\begin{proof}
Let $x,y\in A$ and assume that $\phi$ is a $\wedge$-homomorphism. Then, 
\[
\phi(x\vee y)=\phi(y\wedge x)=\phi(y)\wedge \phi(x)=\phi(x)\vee \phi(y)
\] 
due to the rectangularity of $A$ and $B$, respectively. The converse is analogous. 
\end{proof}

\begin{proposition}\label{cosetrect} 
Both cosets and image sets form rectangular subalgebras of their relevant $\DD$-classes. Moreover, all coset bijections are isomorphisms of cosets.
\end{proposition}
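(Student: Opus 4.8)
The plan is to treat the three assertions — image sets are rectangular subalgebras, cosets are rectangular subalgebras, and coset bijections are isomorphisms — separately, exploiting throughout that each $\DD$-class is a rectangular skew lattice (Clifford--MacLean), that by Lemma \ref{rect} a rectangular band satisfies $x\wedge y=y\vee x$, and that by Lemma \ref{recthom} a $\wedge$-homomorphism between rectangular skew lattices is automatically a $\vee$-homomorphism. Consequently it always suffices to control $\wedge$: closure under $\vee$ and preservation of $\vee$ come for free, and rectangularity, being an identity, is inherited by subalgebras and by homomorphic images. I shall also use that $\RR$ and $\LL$ are congruences on $\mathbf S$.

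For the image sets, fix $a\in A$ and consider the map $\psi_a\colon B\to B$ given by $\psi_a(y)=a\wedge y\wedge a$, whose range is precisely the image set $a\wedge B\wedge a$. Using idempotency together with the regularity identity $p\wedge q\wedge p\wedge r\wedge p=p\wedge q\wedge r\wedge p$ of the $\wedge$-band (with $p=a$) one computes
\[
\psi_a(y)\wedge\psi_a(z)=a\wedge y\wedge a\wedge a\wedge z\wedge a=a\wedge y\wedge a\wedge z\wedge a=a\wedge y\wedge z\wedge a=\psi_a(y\wedge z),
\]
so $\psi_a$ is a $\wedge$-homomorphism, hence a homomorphism by Lemma \ref{recthom}. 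Its image $a\wedge B\wedge a$ is therefore a subalgebra, and being a homomorphic image of the rectangular $B$ it is rectangular; the dual argument applies to image sets in $A$.

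The cosets are the genuine obstacle, since the naive candidate map $a\mapsto a\wedge b\wedge a$ from $A$ onto the coset $A\wedge b\wedge A$ is not a homomorphism. Instead I would argue closure directly. Fix a coset $C=A\wedge b\wedge A$ and take $u=a_1\wedge b\wedge a_1$ and $v=a_2\wedge b\wedge a_2$ in $C$. Since each image set is a transversal of the cosets (Theorem \ref{coset_part}), the intersection of the image set of $a_1$ with $C$ is a single point; as $a_1\wedge v\wedge a_1$ lies in that image set and, by Proposition \ref{strg_prop}, in $C$, it must equal $u$, and symmetrically $a_2\wedge u\wedge a_2=v$. Because $A>B$ gives $u\preceq a_2$, we also have $u\wedge a_2\wedge u=u$. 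Combining these,
\[
u\wedge v=u\wedge a_2\wedge u\wedge a_2=(u\wedge a_2\wedge u)\wedge a_2=u\wedge a_2,
\]
and since $a_2\wedge(u\wedge a_2)\wedge a_2=v$, Proposition \ref{strg_prop} yields $A\wedge(u\wedge a_2)\wedge A=C$. The one remaining point to nail down is that every $w\in B$ lies below some element of $A$ — which follows because $w\le w\vee a_0\vee w\in A$ for any $a_0\in A$ — so that $w$ genuinely belongs to its own coset; applying this to $w=u\wedge v$ places $u\wedge v$ in $C$. Thus $C$ is closed under $\wedge$, hence under $\vee$ by Lemma \ref{rect}, and is a rectangular subalgebra; the dual argument handles cosets of $B$ in $A$.

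Finally, for the coset bijections, let $\varphi\colon J\to C$ be the bijection of Theorem \ref{coset_part} between corresponding cosets $J=B\vee a\vee B$ and $C=A\wedge b\wedge A$, so that $\varphi(x)$ is the unique element of $C$ below $x$, namely $\varphi(x)=x\wedge b_0\wedge x$ for any fixed $b_0\in C$. A bijection between rectangular bands is an isomorphism precisely when it preserves and reflects both $\RR$ and $\LL$, so I would verify exactly this. If $x_1\,\RR\,x_2$ in $J$ then, $\RR$ being a congruence, $x_1\wedge b_0\wedge x_1\,\RR\,x_2\wedge b_0\wedge x_2$, i.e. $\varphi(x_1)\,\RR\,\varphi(x_2)$, and likewise for $\LL$; applying the same reasoning to the dual formula $\varphi^{-1}(y)=y\vee a_0\vee y$ shows $\varphi^{-1}$ preserves $\RR$ and $\LL$ as well. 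Hence $\varphi$ is an $\RR$- and $\LL$-preserving bijection, therefore an isomorphism of the rectangular subalgebras $J$ and $C$. The single genuine difficulty is the coset-closure step; everything else is bookkeeping built on Proposition \ref{strg_prop}, the congruence property of $\RR$ and $\LL$, and Lemmas \ref{rect} and \ref{recthom}.
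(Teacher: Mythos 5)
Your proof is correct, but for the two substantive parts it takes a genuinely different route from the paper's. For closure of a coset $A\wedge b\wedge A$ under $\wedge$, the paper simply computes $(a\wedge b\wedge a')\wedge(a''\wedge b\wedge a''')=a\wedge b\wedge a'''$ directly from rectangularity and regularity of the $\wedge$-band; you instead avoid any such computation by a structural argument, using the transversal property of image sets from Theorem \ref{coset_part} to identify $a_1\wedge v\wedge a_1=u$ and $a_2\wedge u\wedge a_2=v$, reducing $u\wedge v$ to $u\wedge a_2$, and then invoking Proposition \ref{strg_prop} together with the (correctly supplied) observation that $w\in A\wedge w\wedge A$ to locate $u\wedge v$ in the right coset. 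For the coset bijections, the paper again computes directly that $\phi_{a,b}(x)\wedge\phi_{a,b}(y)=\phi_{a,b}(x\wedge y)$ via regularity and then applies Lemma \ref{recthom}, whereas you argue through Green's relations: $\RR$ and $\LL$ are congruences, $\phi_{a,b}$ and its inverse therefore preserve them, and a bijection of rectangular bands preserving $\RR$ and $\LL$ must preserve $\wedge$ because $x\wedge y$ is the unique element $\RR$-related to $x$ and $\LL$-related to $y$. Both of these external facts are true and standard (they go back to Leech's 1989 paper and the $L\times R$ decomposition of rectangular bands), but note that neither is stated in the present paper, so a referee would ask you to cite them; the paper's equational proofs are self-contained and shorter, while yours better exposes the structural reasons the statement holds and sidesteps the slightly delicate band manipulations. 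Your treatment of image sets is essentially the paper's computation, repackaged as the statement that $y\mapsto a\wedge y\wedge a$ is a homomorphism.
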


\begin{proof}
Consider two comparable $\DD$-classes $A>B$ in a skew lattice $\mathbf S$.
Given $a\wedge b\wedge a',a''\wedge b\wedge a'''\in A\wedge b\wedge A$, $(a\wedge b\wedge a')\wedge (a''\wedge b\wedge a''')=a\wedge b\wedge a'''$ in $A\wedge b\wedge A$, due to the rectangularity of the $\DD$-classes.
Likewise, given $a\wedge b\wedge a, a\wedge b'\wedge a\in a\wedge B\wedge a$, $(a\wedge b\wedge a)\wedge (a\wedge b'\wedge a)=a\wedge b\wedge b'\wedge a \in a\wedge B\wedge a$.
That both $A\wedge b\wedge A$ and the image if $a$ in $B$ are also closed under $\vee$ follows from the rectangular identity, $x\wedge y=y\wedge x$ given in Lemma \ref{rect}. 
We shall now show that each $A$-coset in $B$ is isomorphic to any $B$-coset in $A$.
Let $a\in A$ and $b\in B$ and consider the bijection $\phi_{a,b}$ between $B\vee a\vee B$ and $A\wedge b\wedge A$. 
Recall that $\phi_{a,b}$ is defined for each $x\in B\vee a\vee B$ by $\phi_{a,b}(x)=x\wedge b\wedge x$. Thus, for each $x,y \in B\vee a\vee B$, we get: 

\begin{align}
\phi_{a,b}(x)\wedge \phi_{a,b}(y)	&=x\wedge b\wedge x\wedge y\wedge b\wedge y  \nonumber  \\
                                      			&=x\wedge y\wedge x\wedge b\wedge x\wedge y\wedge b\wedge y\wedge x\wedge y   \nonumber  \\
                                      			&=x\wedge y\wedge b\wedge x\wedge y\wedge b\wedge x\wedge y  \nonumber   \\
                                      			&=x\wedge y\wedge b\wedge b\wedge x\wedge y   \nonumber  \\
                                      			&=x\wedge y\wedge b\wedge x\wedge y  \nonumber   \\
                                      			&= \phi_{a,b}(x\wedge y)  \nonumber  
\end{align}
due to the regularity of $\wedge$. Hence, $\phi_{a,b}$ is an isomorphism due to Lemma \ref{recthom}.
\end{proof}

\begin{proposition}\label{newcong}
Let $B$ and $C$ be distinct $\DD$-classes in a skew lattice $\mathbf S$ such that $B>C$. 
Consider the relation $\theta_{[C:B]}$ defined in $C$ by 
\[
x\theta_{[C:B]} y\text{  iff  }x,y\in B\wedge c\wedge B\text{,  for some  } c\in C.
\] 
This relation is the equivalence corresponding to the coset partition of $B$ in $C$. 
Furthermore, it is a congruence of $B$. 
Dually, the equivalence $\theta_{[B:C]}$ derived from the coset partition of $C$ in $B$ is a congruence of $C$.
\end{proposition}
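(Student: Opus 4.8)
The plan is to prove two things: first, that $\theta_{[C:B]}$ really is the equivalence whose blocks are the cosets of $B$ in $C$; and second, that this equivalence is compatible with both operations of the rectangular skew lattice on which it lives, namely $C$ (the dual assertion for $\theta_{[B:C]}$ on $B$ being entirely symmetric). The guiding idea is to translate membership in a common coset into the pointwise algebraic condition supplied by Proposition \ref{strg_prop}, and then to exploit the regularity of the band $(S,\wedge)$ to push a product $z\wedge x\wedge y\wedge z$ through the two factors $z\wedge x\wedge z$ and $z\wedge y\wedge z$.

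First I would record that, by Theorem \ref{coset_part}, the cosets $B\wedge c\wedge B$ (for $c\in C$) partition $C$; hence ``lying in a common coset'' is automatically an equivalence relation, and $\theta_{[C:B]}$ is exactly this partition equivalence, as claimed. To obtain a usable description I would then show that every $c\in C$ lies in its own coset, i.e.\ $c\in B\wedge c\wedge B$: fixing any $b\in B$, the elements $b\vee c$ and $c\vee b$ lie in $B$ (since $B>C$ gives $[b]\vee[c]=B$ in $S/\DD$), and two applications of absorption yield $(b\vee c)\wedge c\wedge(c\vee b)=c\wedge(c\vee b)=c$. Consequently the $\theta_{[C:B]}$-class of $c$ is precisely $B\wedge c\wedge B$, so for $x,y\in C$ one has $x\,\theta_{[C:B]}\,y$ if and only if $B\wedge x\wedge B=B\wedge y\wedge B$; by Proposition \ref{strg_prop} this is in turn equivalent to $z\wedge x\wedge z=z\wedge y\wedge z$ for all (equivalently, some) $z\in B$.

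With this dictionary in hand, the congruence property reduces to a single computation. Because $C$ is rectangular, Lemma \ref{rect} gives $u\vee w=w\wedge u$ throughout $C$, so compatibility with $\vee$ will follow from compatibility with $\wedge$; it therefore suffices to treat $\wedge$. Suppose $x\,\theta_{[C:B]}\,x'$ and $y\,\theta_{[C:B]}\,y'$. Using the regularity of $(S,\wedge)$ in the form $z\wedge x\wedge z\wedge y\wedge z=z\wedge x\wedge y\wedge z$, I would compute, for every $z\in B$,
\[
z\wedge(x\wedge y)\wedge z=(z\wedge x\wedge z)\wedge(z\wedge y\wedge z)=(z\wedge x'\wedge z)\wedge(z\wedge y'\wedge z)=z\wedge(x'\wedge y')\wedge z,
\]
where the middle equality is the hypothesis. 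Since $x\wedge y$ and $x'\wedge y'$ lie in $C$ (a $\wedge$-subalgebra), Proposition \ref{strg_prop} then yields $B\wedge(x\wedge y)\wedge B=B\wedge(x'\wedge y')\wedge B$, that is $x\wedge y\,\theta_{[C:B]}\,x'\wedge y'$. This establishes $\wedge$-compatibility, hence $\theta_{[C:B]}$ is a congruence of $C$; the statement for $\theta_{[B:C]}$ on $B$ is obtained by dualizing $\wedge\leftrightarrow\vee$ and $B\leftrightarrow C$.

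I expect the main obstacle to be the $\wedge$-compatibility step, and specifically making sure the regularity identity is applied correctly so that $z\wedge(x\wedge y)\wedge z$ factors as $(z\wedge x\wedge z)\wedge(z\wedge y\wedge z)$; once this factorization is secured, the substitution and the reduction of $\vee$ to $\wedge$ are routine. A secondary point worth stating with care is the identification $c\in B\wedge c\wedge B$, since the clean translation of $\theta_{[C:B]}$ into the conditions of Proposition \ref{strg_prop} rests on it.
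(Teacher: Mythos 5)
Your proof is correct and follows essentially the same route as the paper: both reduce membership in a common coset to the pointwise condition $z\wedge x\wedge z=z\wedge y\wedge z$ via Proposition \ref{strg_prop}, use the regularity of $\wedge$ to factor $z\wedge(x\wedge y)\wedge z$ as $(z\wedge x\wedge z)\wedge(z\wedge y\wedge z)$, and dispose of $\vee$ via the rectangular identity $x\vee y=y\wedge x$ inside a $\DD$-class. The only difference is that you additionally verify $c\in B\wedge c\wedge B$ and that the relation is the partition equivalence, details the paper leaves implicit.
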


\begin{proof}
Let $x,y,z,w\in C$ such that $x\theta_{[B:C]} y$ and $z\theta_{[B:C]} w$. 
Fix $b\in B$. 
Then, Proposition \ref{strg_prop} implies that $b\wedge x\wedge b=b\wedge y\wedge b$ and that $b\wedge z\wedge b=b\wedge w\wedge b$. 
Thus, 
$\begin{array}{lcl}
b\wedge x\wedge z\wedge b &=& b\wedge x\wedge b\wedge z\wedge b  \\
                                                     &=& b\wedge x\wedge b\wedge b\wedge z\wedge b  \\
                                                     &=& b\wedge y\wedge b\wedge b\wedge w\wedge b  \\
                                                     &=& b\wedge y\wedge b\wedge w\wedge b  \\
                                                     &=& b\wedge y\wedge w\wedge b 
\end{array}$
due to the assumption and to regularity. 
On the other hand, $b\wedge (x\vee z)\wedge b=b\wedge z\wedge x\wedge b=b\wedge w\wedge y\wedge b=b\wedge (y\vee w)\wedge b$ due to the rectangularity of $B\wedge z\wedge x\wedge B$ and to the assumption. Hence, $\theta_{[B:C]}$ is a congruence of $C$.
The dual statement has a similar proof.
\end{proof}

\begin{remark}
Given any distinct $\DD$-classes $A>B$ in a skew lattice $\mathbf S$, each $A$-coset in $B$ is a subskew lattice of $\mathbf S$ isomorphic of any $B$-coset in $A$.
This is due to Propositions \ref{cosetrect} and \ref{newcong}.
Moreover, the equivalence determined in $B$ by $A$ is a congruence of $A$ with $A$-coset in $B$ as its equivalence classes.
Similar remarks hold in the dual case.
\end{remark}


\section{The coset category}
A skew lattice is \emph{categorical} if nonempty composites of coset bijections are coset bijections. 
Rectangular and normal skew lattices are examples of categorical skew lattices (cf.  \cite{Le93}). 

\begin{example}\label{ex_noncat}
A minimal example of a non categorical skew lattice is given by the 8-element left-handed skew chain given in Figure \ref{fig_skewdiag} and discussed in \cite{Le13}.
In that example, considering the skew chain $\set{0,4}>\set{3,6,1,7}>\set{2,5}$, the coset bijections are the following:

\begin{center}
$\begin{array}{c}
\varphi_{1} : \set{0,4}\rightarrow \set{3,1}\text{,  }\varphi_{2} : \set{0,4}\rightarrow \set{6,7}\\

\psi_{1}:\set{3,7}\rightarrow \set{2,5}\text{, }\psi_{2}:\set{6,1}\rightarrow \set{2,5}\\

\chi :\set{0,4}\rightarrow \set{2,5}  

\end{array}$
\end{center}

Observe that $0$ has no image by $\psi_{2}\circ \varphi_{1}$ and that $\chi(0)\in \set{2,5}$ so that $\psi_{2}\circ \varphi_{1}\neq \chi$.
The reader can find a detailed study of such examples in \cite{Le13}  where this skew lattice is named $X_{2}$ and its right-handed version is named $Y_{2}$.  
A family of non categorical examples where $X_{2}$ and $Y_{2}$ belong was further studied in \cite{Le13}.
\end{example}

\begin{figure}  
\begin{center}  
$\begin{array}{ccc}
&
\begin{pspicture}(-1.5,-1)(1,1.5) 
\psline[linewidth=0.5 pt,linestyle=dashed]{*-*}(-1.5,0)(-0.5,0) 
\psline[linewidth=0.5 pt,linestyle=dashed]{*-*}(0.5,0)(1.5,0)
\psline[linewidth=0.5 pt,linestyle=dashed]{*-*}(-0.5,1)(0.5,1)
\psline[linewidth=0.5 pt,linestyle=dashed]{*-*}(-0.5,-1)(0.5,-1)
\psline[linewidth=0.5 pt,linestyle=dashed]{*-*}(-0.5,0)(0.5,0)
\psline[linewidth=0.5pt]{*-*}(-1.5,0)(-0.5,1) 
\psline[linewidth=0.5 pt]{*-*}(-0.5,0)(-0.5,1) 
\psline[linewidth=0.5pt]{*-*}(0.5,0)(0.5,1) 
\psline[linewidth=0.5 pt]{*-*}(1.5,0)(0.5,1)
\psline[linewidth=0.5pt]{*-*}(0.5,0)(0.5,-1)
\psline[linewidth=0.5pt]{*-*}(1.5,0)(0.5,-1) 
\psline[linewidth=0.5pt]{*-*}(-0.5,0)(-0.5,-1) 
\psline[linewidth=0.5pt]{*-*}(-1.5,0)(-0.5,-1) 
\uput[140](-0.5,0){$6$} 
\uput[140](-1.5,0){$3$} 
\uput[40](0.5,0){$1$}
\uput[40](1.5,0){$7$}  
\uput[40](0.5,1){$4$} 
\uput[140](-0.5,1){$0$} 
\uput[270](-0.5,-1){$2$} 
\uput[270](0.5,-1){$5$}
\end{pspicture}
&
\\
\begin{tabular}{ l | cccccccc }
  $\wedge$ & 0 & 1 & 2 & 3 & 4 & 5 & 6 & 7  \\
  \hline
  0                 & 0 & 3 & 2 & 3 & 0 & 2 & 6 & 6 \\
  1                 & 1 & 1 & 5 & 1 & 1 & 5 & 1 & 1 \\
  2                 & 2 & 2 & 2 & 2 & 2 & 2 & 2 & 2 \\
  3                 & 3 & 3 & 2 & 3 & 3 & 2 & 3 & 3 \\
  4                 & 4 & 1 & 5 & 1 & 4 & 5 & 7 & 7 \\  
  5                 & 5 & 5 & 5 & 5 & 5 & 5 & 5 & 5 \\
  6                 & 6 & 6 & 2 & 6 & 6 & 2 & 6 & 6 \\
  7                 & 7 & 7 & 5 & 7 & 7 & 5 & 7 & 7 
\end{tabular}
&
&
\begin{tabular}{ l | cccccccc }
  $\vee$ & 0 & 1 & 2 & 3 & 4 & 5 & 6 & 7  \\
  \hline
  0                 & 0 & 4 & 0 & 0 & 4 & 4 & 0 & 4 \\
  1                 & 0 & 1 & 6 & 3 & 4 & 1 & 6 & 7 \\
  2                 & 0 & 1 & 2 & 3 & 4 & 5 & 6 & 7 \\
  3                 & 0 & 1 & 3 & 3 & 4 & 7 & 6 & 7 \\
  4                 & 0 & 4 & 0 & 0 & 4 & 4 & 0 & 4 \\  
  5                 & 0 & 1 & 2 & 3 & 4 & 5 & 6 & 7 \\
  6                 & 0 & 1 & 6 & 3 & 4 & 1 & 6 & 7 \\
  7                 & 0 & 1 & 3 & 3 & 4 & 7 & 6 & 7 
\end{tabular}
\end{array}$
\caption{The admissible Hasse diagram of a left-handed non categorical skew lattice.} 
\label{fig_skewdiag} 
\end{center}  
\end{figure}

A categorical skew lattice is \emph{strictly categorical} if the compositions of coset bijections between comparable $\DD $-classes $A>B>C$ are never empty. 
Rectangular and normal skew lattices are strictly categorical skew lattices (cf. \cite{Le93}). 
In particular, subskew lattices of strictly categorical skew lattices are also strictly categorical.  

\begin{proposition}\cite{Ka05}\label{prop_bigid}   
A skew chain $\mathbf S$ consisting of $\DD $-classes $A>B>C$ is categorical iff for all elements $a\in A$, $b\in B$ and $c\in C$ satisfying $a>b>c$, one (and hence both) of the following equivalent statements holds:

\begin{itemize}
\item[(i)]  $(A\wedge b\wedge A)\cap (C\vee b\vee C)=(C\vee a\vee C)\wedge b\wedge (C\vee a\vee C)$; 
\item[(ii)]  $(A\wedge b\wedge A)\cap (C\vee b\vee C)=(A\wedge c\wedge A)\vee b\vee (A\wedge c\wedge A).$
\end{itemize}

Moreover, $\mathbf S$ is strictly categorical iff in addition to
\textup{(i)--(ii)}, for all $b,b'\in B$,
\[
(A\wedge b\wedge A)\cap (C\vee b'\vee C)\neq \emptyset .
\]

\end{proposition}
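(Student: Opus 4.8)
The plan is to reduce categoricity to a statement about the single middle class $B$, where two coset partitions coexist: the partition of $B$ into $A$-cosets $A\wedge b\wedge A$ coming from above, and the partition into $C$-cosets $C\vee b\vee C$ coming from below. Throughout I write $\phi_{a,b}\colon B\vee a\vee B\to A\wedge b\wedge A$, $x\mapsto x\wedge b\wedge x$, for the coset bijection of Proposition \ref{cosetrect}, and similarly $\phi_{b,c}$ and the diagonal map $\phi_{a,c}\colon C\vee a\vee C\to A\wedge c\wedge A$. A composite of coset bijections $A\to B\to C$ is $\phi_{b,c}\circ\phi_{a,b}$, with domain
\[
D=\{x\in B\vee a\vee B:\phi_{a,b}(x)\in C\vee b\vee C\}=\phi_{a,b}^{-1}\big((A\wedge b\wedge A)\cap(C\vee b\vee C)\big).
\]
The crucial first observation is a \emph{value computation}: for $x\in D$ one has $(\phi_{b,c}\circ\phi_{a,b})(x)=(x\wedge b\wedge x)\wedge c\wedge(x\wedge b\wedge x)=x\wedge c\wedge x$, using regularity of $\wedge$ together with $b\wedge c\wedge b=c$ (valid since $b>c$). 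Thus a nonempty composite automatically agrees with the diagonal map $x\mapsto x\wedge c\wedge x$ on its domain, so categoricity is not about values at all but only about whether $D$ is a \emph{full} $C$-coset in $A$.

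Next I would record an \emph{image computation}: since $C\vee a\vee C$ is $\wedge$-closed and $\phi_{a,b}$ is a $\wedge$-homomorphism on cosets (Proposition \ref{cosetrect}), every element $u\wedge b\wedge v$ equals $\phi_{a,b}(u\wedge v)$, whence $\phi_{a,b}(C\vee a\vee C)=(C\vee a\vee C)\wedge b\wedge(C\vee a\vee C)$, exactly the right-hand side of (i). I also need the structural fact that the $C$-coset partition of $A$ refines the $B$-coset partition, so that $C\vee a\vee C\subseteq B\vee a\vee B$; this has a short proof, for if $b'\in B$ with $b'>c$ then $b'\vee x\vee b'=b'\vee(c\vee x\vee c)\vee b'$, so equality of $C$-conjugates forces equality of $B$-conjugates (via Proposition \ref{strg_prop}). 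Granting this, the composite is a coset bijection iff $D=C\vee a\vee C$; and because $\phi_{a,b}$ is injective on $B\vee a\vee B$ with $\phi_{a,b}(D)=(A\wedge b\wedge A)\cap(C\vee b\vee C)$ and $\phi_{a,b}(C\vee a\vee C)=(C\vee a\vee C)\wedge b\wedge(C\vee a\vee C)$, this equality of domains is \emph{equivalent} to (i). Running the argument over all admissible triples $a>b>c$ — every nonempty composite reduces to such a triple after choosing representatives in the relevant intersection and using the transversal property of image sets (Theorem \ref{coset_part}) — yields categorical $\Leftrightarrow$ (i).

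For the equivalence (i) $\Leftrightarrow$ (ii) I would \emph{not} attempt to prove that the two right-hand sides coincide, since they need not, as one checks on the non-categorical skew lattice of Figure \ref{fig_skewdiag}. Instead I argue by duality: condition (i) read in the dual skew lattice $(S;\vee,\wedge)$, whose natural order reverses the chain to $C>B>A$, is verbatim condition (ii) for $S$ (the intersection is unchanged and $A\wedge c\wedge A$, $\vee$ replace $C\vee a\vee C$, $\wedge$). Since categoricity is self-dual — coset bijections of $S$ are precisely the coset bijections of its dual — the equivalence categorical $\Leftrightarrow$ (i), applied to the dual, gives categorical $\Leftrightarrow$ (ii), whence (i) $\Leftrightarrow$ (ii). Finally, for the strictly categorical clause I would observe that a composite $\phi_{b',c}\circ\phi_{a,b}$ is nonempty exactly when $(A\wedge b\wedge A)\cap(C\vee b'\vee C)\neq\emptyset$; requiring this for every pair of coset bijections is precisely the displayed condition for all $b,b'\in B$, which added to categoricity is the definition of strictly categorical.

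The main obstacle I anticipate is not any single computation — the two identities above are routine consequences of the regularity of $\wedge$ and of Proposition \ref{cosetrect} — but the bookkeeping that makes them bite: verifying the refinement inclusion $C\vee a\vee C\subseteq B\vee a\vee B$, and confirming that an \emph{arbitrary} nonempty composite of coset bijections is captured by some admissible triple $a>b>c$ so that (i) may be applied uniformly. Once the composite is pinned to the diagonal map and its domain expressed through the injective $\phi_{a,b}$, the equivalences fall out and the duality argument disposes of (ii) cleanly.
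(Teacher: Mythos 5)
The paper does not prove this proposition at all: it is quoted from \cite{Ka05} (see also \cite{Le11a}) and used as a black box, so there is no in-paper argument to compare yours against. Judged on its own terms, your proof is essentially correct and follows the standard route of the cited sources: reduce everything to an admissible triple $a>b>c$, observe that a nonempty composite $\phi_{b,c}\circ\phi_{a,b}$ automatically has the ``diagonal'' values $x\mapsto x\wedge c\wedge x$ (this is literally the regularity computation the paper itself performs in the discussion following Definition \ref{cosetcat}), and then convert ``the domain $D$ is the full coset $C\vee a\vee C$'' into the set identity (i) via injectivity of $\phi_{a,b}$; the duality argument for (i)$\Leftrightarrow$(ii) and the reading of the nonemptiness clause are both sound, and your parenthetical warning that the two right-hand sides of (i) and (ii) can differ in the non-categorical case is confirmed by $X_2$. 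Two steps deserve more care than you give them. First, the identity $u\wedge b\wedge v=\phi_{a,b}(u\wedge v)$ does not follow merely from $\phi_{a,b}$ being a $\wedge$-homomorphism (that gives $(u\wedge b\wedge u)\wedge(v\wedge b\wedge v)=\phi_{a,b}(u\wedge v)$); you should either compute directly with regularity, or note that $u\wedge b\wedge v\in A\wedge b\wedge A$ and $u\wedge b\wedge v\leq u\wedge v$ (by rectangularity of $A$), and then invoke the uniqueness in Theorem \ref{coset_part} of the element of $A\wedge b\wedge A$ below $u\wedge v$. Second, the reduction of an arbitrary nonempty composite $\phi_{b',c}\circ\phi_{a,b}$ to an admissible triple should be spelled out: pick $b''\in(A\wedge b\wedge A)\cap(C\vee b'\vee C)$, note that coset bijections depend only on the cosets so $\phi_{a,b}=\phi_{a,b''}$ and $\phi_{b',c}=\phi_{b'',c}$, and then lift to $a''>b''>c''$ using the transversal property. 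With those two points made explicit the argument is complete.
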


The following is a practical criteria to identify strictly categorical skew lattices.

\begin{proposition}\cite{Le11a}\label{strictbug} 
A skew chain $A > B > C$ is strictly categorical if and only if given $a\in A$, $b,b'\in B$ and $c\in C$ such that $a>b>c$ and $a>b'>c$, then $b=b'$ must follow.
\end{proposition}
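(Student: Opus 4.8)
The plan is to read the criterion off Proposition~\ref{prop_bigid} after rewriting its hypotheses in the language of the natural partial order by means of Theorem~\ref{coset_part}. Recall from that theorem that the elements of $B$ lying below a fixed $a\in A$ form the image set $a\wedge B\wedge a$, which is a transversal of the $A$-cosets in $B$, while the elements of $B$ lying above a fixed $c\in C$ form $c\vee B\vee c$, a transversal of the $C$-cosets in $B$. Hence a pair $b,b'$ with $a>b>c$ and $a>b'>c$ is exactly a pair lying in $(a\wedge B\wedge a)\cap(c\vee B\vee c)$, and by the transversal property $b\neq b'$ forces $b$ and $b'$ into distinct $A$-cosets and simultaneously into distinct $C$-cosets. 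Thus the displayed condition is precisely the assertion that, for every $a\in A$ and $c\in C$, this intersection has at most one element; it is a pure uniqueness statement (its hypothesis already presupposes the existence of the intermediates).

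For sufficiency I would assume the criterion and verify the two clauses of Proposition~\ref{prop_bigid}. For the categorical identity~(i), one of the two inclusions between $(A\wedge b\wedge A)\cap(C\vee b\vee C)$ and $(C\vee a\vee C)\wedge b\wedge(C\vee a\vee C)$ holds in every skew chain by a routine computation with the regularity of $\wedge$, in the spirit of Proposition~\ref{cosetrect}; the reverse inclusion is the genuine categorical content, and I would extract it by using the uniqueness of the intermediate to rewrite an arbitrary element of the larger set as a sandwich $p\wedge b\wedge p'$ with $p,p'\in C\vee a\vee C$, with Proposition~\ref{strg_prop} controlling when two such sandwiches determine the same coset. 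Separately one must check the non-emptiness clause, that every $A$-coset meets every $C$-coset in $B$; here the transversal structure of the image sets, together with the criterion forcing each such meeting to be single-valued, should yield that the intersections are singletons, hence nonempty.

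For necessity I would argue by contraposition: assume $a>b>c$, $a>b'>c$ and $b\neq b'$, and deduce that $\mathbf S$ is not strictly categorical. By the translation above, $b$ and $b'$ lie in distinct $A$-cosets and in distinct $C$-cosets while sharing the common bounds $a$ and $c$, so the order-determined bijection $C\vee a\vee C\to A\wedge c\wedge A$ factors through $B$ in two different ways, through the cosets of $b$ and through those of $b'$. Substituting the data into identity~(i) of Proposition~\ref{prop_bigid} and simplifying by regularity then produces an element that belongs to $(C\vee a\vee C)\wedge b\wedge(C\vee a\vee C)$ and to $A\wedge b\wedge A$ but lies outside $C\vee b\vee C$, so that the two sides of~(i) differ and categoricity fails. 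This is exactly the phenomenon displayed in Example~\ref{ex_noncat}, where the two routes $0>3>2$ and $0>6>2$ give $4\wedge 3\wedge 4=1\in A\wedge 3\wedge A$ while $1\notin C\vee 3\vee C$.

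The hard part will be the coset bookkeeping that links the element-level uniqueness to the set identities of Proposition~\ref{prop_bigid}: in the necessity direction, manufacturing from the second intermediate $b'$ the witnessing element whose sandwich escapes the $C$-coset of $b$ (noting that identity~(i) can fail on either side, so both possibilities must be covered); and in the sufficiency direction, deriving the non-emptiness clause from a condition that only asserts ``at most one'', which is the most delicate point. I expect the cleanest way to carry out these regularity manipulations is to coordinatize the rectangular middle class as $B\cong L\times R$, under which each coset partition becomes a product of equivalences on $L$ and on $R$ (using that these partitions are congruences, as in Proposition~\ref{newcong}) and the operations act coordinatewise; the relations $a>b$ and $b>c$ then translate into the matching of the respective coordinates, turning the uniqueness of the intermediate into a transparent statement about these product partitions, from which both non-emptiness and identity~(i) follow.
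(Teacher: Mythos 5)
The paper states Proposition~\ref{strictbug} without proof, citing \cite{Le11a}, so there is no in-house argument to compare yours against; judged on its own terms, your outline has a genuine gap in each direction. Your opening translation is correct: by Theorem~\ref{coset_part} the elements between $a$ and $c$ are exactly $(a\wedge B\wedge a)\cap(c\vee B\vee c)$, and two distinct such elements must lie in distinct $A$-cosets and distinct $C$-cosets of $B$. But the necessity direction then goes wrong: you claim that two distinct midpoints always yield an element witnessing a failure of identity~(i) of Proposition~\ref{prop_bigid}, i.e.\ a failure of \emph{categoricity}. The paper's own Example~\ref{exminimal} refutes this: in the right-handed chain $\{1\}>\{2,3\}>\{0\}$ of Figure~\ref{nonstcat} one has $1>2>0$ and $1>3>0$ with $2\neq 3$, yet identity~(i) holds on the nose ($(A\wedge b\wedge A)\cap(C\vee b\vee C)=\{b\}=(C\vee 1\vee C)\wedge b\wedge (C\vee 1\vee C)$ for $b=2,3$), so no element of the kind you describe exists; what fails is only the non-emptiness clause, $(A\wedge 2\wedge A)\cap(C\vee 3\vee C)=\emptyset$. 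Your appeal to Example~\ref{ex_noncat} conflates the stronger non-categorical phenomenon of $X_2$ with the weaker failure of strictness. A correct contrapositive must be allowed to exit through the non-emptiness clause; one clean way is to note that the set $M$ of midpoints of $(a,c)$ is closed under both operations, so $\{a\}\cup M\cup\{c\}$ is a subalgebra in which every $\{a\}$-coset and every $\{c\}$-coset of $M$ is a singleton, whence $|M|\geq 2$ violates non-emptiness there, and strict categoricity is inherited by subalgebras.

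The sufficiency direction is a promissory note rather than a proof: deriving ``every $A$-coset meets every $C$-coset in $B$'' from the purely negative ``at most one midpoint'' is exactly the hard content, and ``should yield'' is all you offer. Two remarks on what an actual argument needs. First, existence of midpoints is automatic whenever $a>c$: for any $b_0\in B$ the element $a\wedge(c\vee b_0\vee c)\wedge a$ lies in $B$, below $a$ and above $c$ (a short computation with $a\wedge c=c=c\wedge a$ and $(c\vee b_0\vee c)\wedge c=c$), so your hypothesis really asserts that each pair $a>c$ has a \emph{unique} midpoint; this is worth proving explicitly rather than gesturing at the transversal structure. Second, you must still convert that unique-midpoint map into both the non-emptiness clause for arbitrary pairs of cosets and identity~(i) for all $a>b>c$; the $L\times R$ coordinatization of $B$ and Proposition~\ref{newcong} are a plausible vehicle, but nothing in the proposal actually carries out either verification. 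As it stands the proposal identifies the right reduction to Proposition~\ref{prop_bigid} but proves neither implication.
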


\begin{example}\label{exminimal}
A minimal example of a categorical skew lattice that is not strictly categorical is given by the right-handed manifestation of the skew chain with three $\DD $-classes in Figure \ref{nonstcat}. 
In fact, the composition of the coset bijections $\psi :\set{1}\rightarrow \set{2}$ and $\varphi ':\set{3}\rightarrow \set{0}$ is empty.
Observe that $\chi:\set{1}\rightarrow \set{0}$ can be decomposed either by the composition of $\psi $ and $\varphi:\set{2}\rightarrow \set{0}$, or by the composition of $\psi ' :\set{1}\rightarrow \set{3}$ and $\varphi '$ (cf. \cite{Le11a}).
\end{example}

\begin{proposition}\cite{Co11}\label{cs_normal} 
Let $\mathbf S$ be a skew lattice. 
Then, $\mathbf S$ is normal iff for each comparable pair of $\DD $-classes $A>B$ in $\mathbf S$, $B$ is the entire coset of $A$ in $B$. 
That is, for all $x,x'\in B$, 
\[
A\wedge x\wedge A=A\wedge x'\wedge A.
\]
Dually, $\mathbf S$ is conormal iff for all comparable pairs of $\DD $-classes $A>B$ in $\mathbf S$ and all $x,x'\in A$, $B\vee x\vee B=B\vee x'\vee B$.
\end{proposition}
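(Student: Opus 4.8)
The plan is to route both implications through Proposition~\ref{strg_prop}, which lets me replace the coset equality $A\wedge x\wedge A=A\wedge x'\wedge A$ by the pointwise condition that $a\wedge x\wedge a=a\wedge x'\wedge a$ for every $a\in A$; equivalently, that the image set $a\wedge B\wedge a$ of each $a\in A$ is a singleton. So the proposition amounts to the statement that $\mathbf S$ is normal iff, for every comparable pair $A>B$ and every $a\in A$, the element $a\wedge x\wedge a$ does not depend on the choice of $x\in B$. I will prove the forward direction by using $\wedge$-normality to collapse these sandwiches, and the converse by extracting the $\wedge$-normal identity from the singleton condition.

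For the forward direction I assume $(S;\wedge)$ is a normal band, i.e. $x\wedge y\wedge z\wedge w=x\wedge z\wedge y\wedge w$. The key consequence is that, inside a word flanked on both sides by a fixed factor, the intermediate factors may be permuted arbitrarily, since adjacent transpositions are instances of the normal identity and they generate all permutations. Fix $A>B$, $a\in A$ and $b,b'\in B$. Since $B$ is a rectangular $\DD$-class I have $b=b\wedge b'\wedge b$ and $b'=b'\wedge b\wedge b'$, whence $a\wedge b\wedge a=a\wedge b\wedge b'\wedge b\wedge a$ and $a\wedge b'\wedge a=a\wedge b'\wedge b\wedge b'\wedge a$. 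Permuting the middle factors (which are flanked by $a$) and collapsing repetitions by idempotency, both sides reduce to $a\wedge b'\wedge b\wedge a$, so $a\wedge b\wedge a=a\wedge b'\wedge a$. By Proposition~\ref{strg_prop} this yields $A\wedge b\wedge A=A\wedge b'\wedge A$ for all $b,b'\in B$, i.e. $B$ is a single $A$-coset.

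The converse is where the real work lies. From the hypothesis and Proposition~\ref{strg_prop} I obtain the pointwise identity $a\wedge x\wedge a=a\wedge x'\wedge a$ whenever $A>B$, $a\in A$ and $x,x'\in B$. I first establish the sandwich form of normality, $x\wedge y\wedge z\wedge x=x\wedge z\wedge y\wedge x$. Writing $u=x\wedge y\wedge z\wedge x$ and $u'=x\wedge z\wedge y\wedge x$, both lie in the single $\DD$-class $M$ corresponding to $\bar x\wedge\bar y\wedge\bar z$ in $S/\DD$, and a direct check gives $u\leq x$ and $u'\leq x$ in the natural partial order. If $M<\bar x$ strictly, then the pointwise identity applied to the comparable classes of $x$ over $M$ gives $x\wedge u\wedge x=x\wedge u'\wedge x$; since $u,u'\leq x$ we have $x\wedge u\wedge x=u$ and $x\wedge u'\wedge x=u'$, so $u=u'$. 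If instead $M=\bar x$, then $u,u'$ are $\DD$-related to $x$ and lie below it, and since every $\DD$-class is an antichain for the natural partial order we conclude $u=x=u'$. Either way the sandwich identity holds.

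The remaining step, and the main obstacle, is to upgrade the sandwich identity to the full band identity $x\wedge y\wedge z\wedge w=x\wedge z\wedge y\wedge w$ demanded by the definition of a normal band. Here I would invoke the standard band-theoretic fact that these two identities define the same variety of normal bands; concretely, the sandwich identity is precisely the assertion that $e\wedge S\wedge e$ is commutative (a semilattice) for every $e$, and this principal-subalgebra condition is equivalent to full $\wedge$-normality (cf.~\cite{Ho76}). Finally, the conormal statement is the order-and-operation dual of the above: replacing $\wedge$ by $\vee$, reversing the order so that the roles of $A$ and $B$ are interchanged, and using the dual halves of Proposition~\ref{strg_prop} and of rectangularity, the identical argument shows $\mathbf S$ is conormal iff $B\vee x\vee B=B\vee x'\vee B$ for all $x,x'$ in the upper class.
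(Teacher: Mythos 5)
The paper does not actually prove Proposition~\ref{cs_normal}; it is imported from \cite{Co11} as a cited result, so there is no internal proof to measure you against. Judged on its own terms, your argument is correct and essentially the standard one. The forward direction is fine: rectangularity of $B$ gives $b=b\wedge b'\wedge b$, the normal identity lets you permute the factors flanked by $a$, and Proposition~\ref{strg_prop}\,(iii)$\Rightarrow$(i) converts the pointwise equality $a\wedge b\wedge a=a\wedge b'\wedge a$ into the coset equality. The converse is also sound: $u=x\wedge y\wedge z\wedge x$ and $u'=x\wedge z\wedge y\wedge x$ both satisfy $u,u'\le x$ and lie in the class of $\bar x\wedge\bar y\wedge\bar z$, so either the hypothesis (via Proposition~\ref{strg_prop}\,(i)$\Rightarrow$(ii) applied to $D_x>M$) or the antichain property of $\DD$-classes forces $u=u'$, yielding the ``sandwich'' identity $x\wedge y\wedge z\wedge x=x\wedge z\wedge y\wedge x$. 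The only step you outsource --- that this three-variable identity already defines the variety of normal bands --- is a genuine classical fact, and your instinct that it is the crux is right; for a cleaner self-contained finish you could note that the sandwich identity itself forces regularity (substitute $y\wedge x$ for $y$ to get $x\wedge y\wedge x\wedge z\wedge x=x\wedge y\wedge z\wedge x$), and a regular band embeds in the product of its left-regular image $S/\Rrel$ and right-regular image $S/\Lrel$, in each of which the sandwich identity collapses to $x\wedge y\wedge z=x\wedge z\wedge y$ (resp.\ its dual) and hence immediately gives $x\wedge y\wedge z\wedge w=x\wedge z\wedge y\wedge w$; this then lifts back to $\mathbf S$. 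Your handling of the conormal half by duality matches the paper's general practice.
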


\begin{example}\label{nonorm}
Strictly categorical skew lattices need not be normal: any skew lattice with the admissible Hasse diagram below represents a right-handed skew chain and thus a strictly categorical skew lattice (consider, for instance, the subskew lattice $\set{1,2,3}$ of the skew lattice in Example \ref{nonstcat}.
\begin{center}
\begin{pspicture}(-1,-0.5)(1,1.5)
\psline[linewidth=0.5 pt,linestyle=dashed]{*-*}(1,0)(-1,0)
\psline[linewidth=0.5pt]{*-*}(-1,0)(0,1)
\psline[linewidth=0.5pt]{*-*}(1,0)(0,1)
\uput[180](-1,0){$2$}
\uput[1](1,0){$3$}
\uput[90](0,1){1}
\end{pspicture}
\end{center}
Normality fails as the upper $\DD $-class determines more then one coset in the lower $\DD $-class: observe that, considering $A=\set{1}$ and $B=\set{2,3}$, $A>B$ is a strictly categorical skew chain, according to Proposition \ref{strictbug}, but 
\[
A\wedge 2\wedge A=\set{2}\neq \set{3}=A\wedge 3\wedge A.
\]
\end{example}


The prefix \textit{categorical} was motivated by the definition of categorical skew lattices as the ones for whom coset bijections form a category under certain conditions: being strictly categorical.
This category was first introduced in \cite{Le93}, mentioned as \emph{category of coset bijections}, defined as follows: 

\begin{definition}\label{cosetcat}
Let $\mathbf S$ be a strictly categorical skew lattice.
Define the \emph{coset category}, denoted by $\CC $, as given by:
\begin{itemize}
\item[] the class of objects of $\CC$ is the set of all the $\DD $-classes of $\mathbf S$ (endowed with their rectangular structure);
\item[] for strictly comparable $\DD$-classes $A > B$, $\CC (A, B)$ is the set of all the coset bijections from the $B$-cosets in $A$ to the $A$-cosets in $B$ (that are isomorphisms). Otherwise, $\CC (A, B)$ consists of the empty bijection; 
\item[] $\CC (A, A)$ consists of the unique identity bijection on $A$;
\item[] morphism composition is the usual composition of partial bijections. 
\end{itemize}
\end{definition}

The category is modified in case $\mathbf S$ is categorical but not strictly categorical by adding the requirement that, for each pair $A\geq B$, $\CC(A,B)$ contains the empty bijection. 
And in the case of empty composites an $A-B$ labelled copy of the empty partial bijection with empty composites, given the appropriate labeling to avoid confusing empty partial bijections in different morphism sets.

\begin{remark}\cite{AAA80}
All $\DD $-classes in a skew lattice form antichains. Thus, in a categorical skew lattice  $\mathbf S$, for all comparable $\DD $-classes $A\geq B$,
\[
\cup \CC (A,B) = \cup \set{\phi_{a,b}:B\vee a\vee B\rightarrow A\wedge b\wedge A \text{ coset bijection }\mid a\in A,b\in B} =\geq _{A\times B}.
\]  

By the nature of the coset bijections, $\CC $ is a self dual category.
\end{remark}
 
The following research was proposed to us by Jonathan Leech for the purpose of the author's Ph.D. dissertation in \cite{JPC12}, and shows that not all skew lattices can determine such a category as strictly categorical skew lattices do.

Consider a skew chain $\set{A>B>C}$, $x\in A$, $y,y'\in B$, $z\in C$ and $\varphi:B\vee x\vee B\rightarrow A\wedge y\wedge A$, $\psi:C\vee y'\vee C\rightarrow B\wedge z\wedge B$ coset bijections. 
As neither $\psi$ nor $\varphi$ are empty, if $\psi   \varphi$ is empty then $\text{dom}(\psi)\cap im(\varphi)=(C\vee y'\vee C)\cap (A\wedge y\wedge A)=\emptyset$.
Otherwise, $\psi   \varphi$ is nonempty so take $a\in B\vee x\vee B$, $b\in (C\vee y'\vee C)\cap (A\wedge y\wedge A)$ and $c\in B\wedge z\wedge B$ such that $a>b>c$ and that $\varphi =\phi_{a,b}$, $\psi =\phi_{b,c}$ and $\chi = \phi_{a,c}$. 
Observe that
$\begin{array}{lcl}
\text{dom}(\phi_{a,b}   \phi_{b,c}^{-1}) &= &\phi_{a,b}^{-1} ((A\wedge b\wedge A)\cap (C\vee b\vee C))	\\
                                               & \subseteq &\phi_{a,b}^{-1} (C\vee b\vee C)	\\
                                              & = & (C\vee b\vee C)\vee a\vee (C\vee b\vee C)		\\
                                               &\subseteq & C\vee a\vee C  
\end{array}$
and that, for all $e\in B\vee a\vee B$, 
$\begin{array}{lcl}
\phi_{b,c}   \phi_{a,b} & = & \phi_{b,c} (\phi_{a,b} (e) = \phi_{b,c} (e\wedge b\wedge e)	 \\
                            & = &(e\wedge b\wedge e)\wedge c\wedge (e\wedge b\wedge e) = e\wedge b\wedge c\wedge b\wedge e \\
                            & = & e\wedge c\wedge e	=\phi_{a,c} (e)
\end{array}$
Hence, there is a unique coset bijection $\chi:A\rightarrow C$ containing $\psi   \varphi\neq \emptyset$. We denote this coset bijection by $\psi \times \varphi$. Whenever, $\psi   \varphi= \emptyset$ we say that $\psi \times \varphi = \emptyset$.

In fact, $\psi \times \varphi$ needs not be the direct composite of partial functions if $\mathbf S$ is not categorical. 
Moreover, $\chi:A\rightarrow C$ always exists as a coset bijection and always contains $\psi \times \varphi$, which is trivial in the case that $\psi \times \varphi$ is empty.

Given a skew lattice $\mathbf S$, define a new pre-category $\EE$ for which the objects are the $\DD$-classes of $\mathbf S$ and the morphisms between two objects (i.e., $\DD$-classes) $A$ and $B$ is the set $\EE(A, B)$ of all coset bijections from $A$ to $B$. $\EE (A, A)$ is the unique identity bijection on an object $A$ and the composition of two morphisms $\EE(A,B)$ and $\EE(B,C)$ is the morphism that includes compositions of all coset bijections from $A$ to $B$ with coset bijections from $B$ to $C$. In other words, for all objects $A$, $B$ and $C$, 
\[
\EE(A,B)   \EE(B,C) =\set{ \psi \times \varphi \mid \varphi:A\rightarrow B\text{ and }\psi:B\rightarrow C\text{ are coset bijections }}.
\]  

Observe that in the skew lattice represented in Figure \ref{leech} we have the following cosets:
\begin{center}
\begin{align} 
B\vee 0\vee B=&\set{0,4}=B\vee 4\vee B  & &  A\wedge 2\wedge A=\set{2,5}=A\wedge 5\wedge A \nonumber \\
A\wedge 3\wedge A=&\set{1,3}=A\wedge 1\wedge A  & & D\vee 3\vee D=\set{3}   \nonumber \\
A\wedge 6\wedge A=&\set{6,7}=A\wedge 7\wedge A  & & D\vee 6\vee D=\set{6}   \nonumber \\
C\vee 3\vee C=&\set{3,7}=C\vee 7\vee C    & &  D\vee 1\vee D=\set{1}  \nonumber \\
C\vee 6\vee C=&\set{1,6}=C\vee 1\vee C      & & D\vee 7\vee D=\set{7}   \nonumber \\
B\wedge 2\wedge B=&\set{2,5}=B\wedge 5\wedge B	& & B\wedge 8\wedge B=\set{8}   \nonumber \\
D\vee 2\vee D=&\set{2}  & & D\vee 0\vee D=\set{0}  \nonumber  \\
D\vee 5\vee D=&\set{5}  & & D\vee 4\vee D=\set{4}  \nonumber  \\
C\wedge 8\wedge C=&\set{8}  & &  A\wedge 8\wedge A=\set{8} \nonumber  \\
C\vee 0\vee C=&\set{0,4}=C\vee 4\vee C  & &    \nonumber
\end{align}
\end{center}
We, therefore, are able to consider the corresponding coset bijections as follows:
\[ 
\begin{array}{lcr}
\begin{array}{lc}
\phi_{0,6}:&0\rightarrow 6\\
                  &4\rightarrow 7
\end{array} 
&&
\begin{array}{lc}
\phi_{3,2}:&7\rightarrow 5\\
                  &3\rightarrow 2
\end{array} 
\\
\begin{array}{lc}
\phi_{2,0}:&2\rightarrow 8
\end{array}        
&&
\begin{array}{lc}
\phi_{3,2}\times \phi_{0,6}:&0\rightarrow 2\\
                                               &4\rightarrow 5
\end{array} 
\\
\begin{array}{lc}
\phi_{2,0}\times (\phi_{3,2}\times \phi_{0,6}):&0\rightarrow 8
\end{array} 
&&
\begin{array}{lc}
\phi_{2,0}\times \phi_{3,2}:&3\rightarrow 0
\end{array}  
\end{array} 
\]
In this case, $\text{dom} (\phi_{2,0}\times \phi_{3,2})=\set{3}$ and $3\notin \set{6,7}=im( \phi_{0,6})$. 
Thus, $(\phi_{2,0}\times \phi_{3,2})    \phi_{0,6} =\emptyset $ so that  $(\phi_{2,0}\times \phi_{3,2})\times   \phi_{0,6} =\emptyset $.
As $(0,8)\in \phi_{2,8}\times (\phi_{3,2}\times \phi_{0,6})$, we conclude that 
\[
\phi_{2,8}\times (\phi_{3,2}\times \phi_{0,6})\neq (\phi_{2,8}\times \phi_{3,2})\times \phi_{0,6}.
\]

Hence, $\delta \times \psi \times \varphi$ is not uniquely determined as $\times$ is not associative and, therefore, $\EE$ can not constitute a category.
In fact, the mentioned skew lattice is the result of taking the non categorical skew lattice $X_{2}$ and adjoin a $0$ element at the bottom. 
Analogously we could also had accomplished such an example by adjoining a $1$ element at the top instead. 
These four examples are likely to be the minimal possible examples.

\begin{figure}  
\begin{center}  
$\begin{array}{lll}
&
\scalebox{0.8}{
\begin{pspicture}(-1.5,-2)(1.5,2) 
\psline[linewidth=0.5 pt,linestyle=dashed]{*-*}(-1.5,0)(-0.5,0) 
\psline[linewidth=0.5 pt,linestyle=dashed]{*-*}(0.5,0)(1.5,0)
\psline[linewidth=0.5 pt,linestyle=dashed]{*-*}(-0.5,1)(0.5,1)
\psline[linewidth=0.5 pt,linestyle=dashed]{*-*}(-0.5,-1)(0.5,-1)
\psline[linewidth=0.5 pt,linestyle=dashed]{*-*}(-0.5,0)(0.5,0)
\psline[linewidth=0.5pt]{*-*}(-1.5,0)(-0.5,1) 
\psline[linewidth=0.5 pt]{*-*}(-0.5,0)(-0.5,1) 
\psline[linewidth=0.5pt]{*-*}(0.5,0)(0.5,1) 
\psline[linewidth=0.5 pt]{*-*}(1.5,0)(0.5,1)
\psline[linewidth=0.5pt]{*-*}(0.5,0)(0.5,-1)
\psline[linewidth=0.5pt]{*-*}(1.5,0)(0.5,-1) 
\psline[linewidth=0.5pt]{*-*}(-0.5,0)(-0.5,-1) 
\psline[linewidth=0.5pt]{*-*}(-1.5,0)(-0.5,-1) 
\psline[linewidth=0.5 pt]{*-*}(-0.5,-1)(0,-1.5)
\psline[linewidth=0.5 pt]{*-*}(0.5,-1)(0,-1.5)
\uput[140](-0.5,0){$6$} 
\uput[140](-1.5,0){$3$} 
\uput[40](0.5,0){$1$}
\uput[40](1.5,0){$7$}  
\uput[40](0.5,1){$4$} 
\uput[140](-0.5,1){$0$} 
\uput[270](-0.5,-1){$2$} 
\uput[270](0.5,-1){$5$}
\uput[270](0,-1.5){$8$}
\end{pspicture}
}
&
\\
\scalebox{0.8}{
\begin{tabular}{ l | ccccccccc }
  $\wedge$ & 0 & 1 & 2 & 3 & 4 & 5 & 6 & 7 & 8 \\
  \hline
  0                 & 0 & 3 & 2 & 3 & 0 & 2 & 6 & 6 &8\\
  1                 & 1 & 1 & 5 & 1 & 1 & 5 & 1 & 1 &8\\
  2                 & 2 & 2 & 2 & 2 & 2 & 2 & 2 & 2 &8\\
  3                 & 3 & 3 & 2 & 3 & 3 & 2 & 3 & 3 &8\\
  4                 & 4 & 1 & 5 & 1 & 4 & 5 & 7 & 7 &8\\  
  5                 & 5 & 5 & 5 & 5 & 5 & 5 & 5 & 5&8\\
  6                 & 6 & 6 & 2 & 6 & 6 & 2 & 6 & 6 &8\\
  7                 & 7 & 7 & 5 & 7 & 7 & 5 & 7 & 7 &8\\
  8                 & 8 & 8 & 8 & 8 & 8 & 8 & 8 & 8 &8\\
\end{tabular}
}
&
&
\scalebox{0.8}{
\begin{tabular}{ l | ccccccccc }
  $\vee$ & 0 & 1 & 2 & 3 & 4 & 5 & 6 & 7& 8  \\
  \hline
  0                 & 0 & 4 & 0 & 0 & 4 & 4 & 0 & 4 & 0 \\
  1                 & 0 & 1 & 6 & 3 & 4 & 1 & 6 & 7 &1 \\
  2                 & 0 & 1 & 2 & 3 & 4 & 5 & 6 & 7 &2 \\
  3                 & 0 & 1 & 3 & 3 & 4 & 7 & 6 & 7 &3 \\
  4                 & 0 & 4 & 0 & 0 & 4 & 4 & 0 & 4 &4 \\  
  5                 & 0 & 1 & 2 & 3 & 4 & 5 & 6 & 7 &5 \\
  6                 & 0 & 1 & 6 & 3 & 4 & 1 & 6 & 7 &6 \\
  7                 & 0 & 1 & 3 & 3 & 4 & 7 & 6 & 7 &7 \\
  8                 & 0 & 1 & 2 & 3 & 4 & 5 & 6 & 7 & 8
\end{tabular}
}
\end{array}$
\caption{\small \sl The admissible Hasse diagram of another left-handed non categorical skew lattice for which $\phi_{2,8}\times (\phi_{3,2}\times \phi_{0,6})\neq (\phi_{2,8}\times \phi_{3,2})\times \phi_{0,6}$.} \label{leech} 
\end{center}  
\end{figure}


\section{Strictly categorical skew lattices in rings}
Let $\mathbf{R}=(R,+,\cdot)$ be a ring and $E(\mathbf{R})$ the set of all idempotent elements in $\mathbf{R}$. 
Set $x\wedge y=xy$ and $x\vee y=x\circ y=x+y-xy$. $x\circ y$ needs not be idempotent (cf. \cite{Ka05}).
A regular band $B$ in a ring does not generate a skew lattice in general but the assertion is true if $B$ satisfies a stronger identity, that is if $B$ is in fact a normal band.
If $S\subseteq E(\mathbf{R})$ is closed under both $\cdot $ and $\circ $ then $(S;\cdot , \circ )$ is a skew lattice.  
Another possible choice for the operation $\vee$ is  $\nabla$, defined by, 
\[
x\nabla y=(x\circ y)^2=x+y+yx-xyx-yxy,
\]
In general, the operation $\nabla$ needs not be associative (conf. \cite{Ka05c}, ex 2.1).
Though, $\nabla $ is associative in the presence of normality (conf. \cite{Ka05c}, prop 2.2). 
By a \emph{skew lattice in a ring}\index{skew lattice in a ring} $\mathbf R$ we mean a set $S\subseteq E(\mathbf R)$ that is closed under both multiplication and $\nabla$, and forms a skew lattice for the two operations. 
In particular, we have to make sure that $\nabla$ is associative in $\mathbf S$. 
Given a multiplicative band $\mathbf B$ in a ring $\mathbf R$ the relation between $\circ$ and $\nabla$ is given by $e\nabla f=(e\circ f)^2$ for all $e,f\in B$. 
Thus, $a\circ b$ and $a \nabla b$ coincide whenever $a\circ b$ is idempotent. 
In the case of right-handed skew lattices the nabla operation reduces to the circle operation.  
Any normal multiplicative band of idempotents in a ring generates a skew lattice under multiplication and the operation $\nabla$ with the reduct $(S, \cdot)$ also being normal. 
The converse is however false, that is, a skew lattice whose multiplicative reduct is not normal exist (cf. \cite{Ka05c}). 
Hence, skew lattices in rings need not be normal.

The standard form for pure bands in matrix rings was developed by Fillmore at al. in \cite{Fi94} and \cite{Fi99}. 
Based on it, Cvetko-Vah described in \cite{Ka07} the standard form for right-handed skew lattices in $M_n(F)$ as follows: 
let $E_1< \dots <E_m$ be a maximal chain of $\DD$-classes of the skew lattice $\mathbf S$. 
Then a basis for $F^n$ exists such that in this basis, for any three matrices $a\in E_i$, $b\in E_j$ and $c\in E_k$, $i>j>k$, a block decomposition exists such that $a$, $b$ and $c$ have block forms
\begin{center}
 $\begin{array}{cccc}
a= \begin{bmatrix}
I & 0 & 0 & a_{14} \\
0 & I & 0 & a_{24} \\
0 & 0 & I & a_{34} \\
0 & 0 & 0 & 0
\end{bmatrix},
&
b= \begin{bmatrix}
I & 0 & b_{13} & b_{14} \\
0 & I & b_{23} & b_{24}\\
0 & 0 & 0 & 0 \\
0 & 0 & 0 & 0
\end{bmatrix}&
\text{ and  } &
c= \begin{bmatrix}
I & c_{12} & c_{13} & c_{14} \\
0 & 0 & 0 & 0 \\
0 & 0 & 0 & 0 \\
0 & 0 & 0 & 0
\end{bmatrix}.
\end{array}$
\end{center}

\begin{lemma}\label{ab}
Given a skew chain $A>B>C$ in $M_n(F)$ consider matrices $a\in A$, $b\in B$ and $c\in C$ in the above block form
such that $a>b>c$. 
Then,  
\begin{align}
a_{14}+b_{13}a_{34}=b_{14}	& &	b_{13}+c_{12}b_{23}=c_{13},	\nonumber  \\
a_{24}+b_{23}a_{34}=b_{24}	& &	b_{14}+c_{12}b_{24}=c_{14}	\nonumber 
\end{align}
\end{lemma}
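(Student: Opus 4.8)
The plan is to translate the two order relations into multiplicative identities and then read off the four equations by performing the relevant block multiplications. Recall that in a skew lattice in a ring the meet $\wedge$ is matrix multiplication, and the natural partial order $x \geq y$ amounts to $x\wedge y = y = y\wedge x$, that is, $xy = y = yx$. Hence $a > b$ gives in particular $ba = b$, while $b > c$ gives $cb = c$; these are the two facts I would use.

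First I would note that the products taken in the other order carry no information: multiplying out the standard block forms shows that $ab = b$ and $bc = c$ hold identically, since the nonzero fourth-column blocks of $a$ and of $b$ are annihilated by the zero rows of $b$ and of $c$ respectively. The content of the lemma therefore lies entirely in the reversed products $ba$ and $cb$.

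Next I would compute $ba$ block by block. Only the first two rows of $ba$ can be nonzero, because rows $3$ and $4$ of $b$ vanish, and comparing the $(1,4)$ and $(2,4)$ blocks of $ba$ with those of $b$ yields
\begin{align}
a_{14}+b_{13}a_{34} &= b_{14}, & a_{24}+b_{23}a_{34} &= b_{24}, \nonumber
\end{align}
which is the left-hand column of identities; the remaining blocks of $ba$ agree with $b$ automatically. The same computation applied to $cb = c$ gives the right-hand column: here only the first row of $cb$ is nonzero, and comparing its $(1,3)$ and $(1,4)$ blocks with those of $c$ yields
\begin{align}
b_{13}+c_{12}b_{23} &= c_{13}, & b_{14}+c_{12}b_{24} &= c_{14}. \nonumber
\end{align}

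There is no serious obstacle beyond the bookkeeping of block multiplication. The only conceptual point is to recognize that the order relations must be invoked in the form $ba = b$ and $cb = c$ rather than in the trivially satisfied form $ab = b$ and $bc = c$; once that is seen, each identity is simply the statement that a single off-diagonal block of a product equals the corresponding block of the smaller matrix.
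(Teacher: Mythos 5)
Your proposal is correct and follows the same route as the paper: it invokes the order relations in the form $ba=b$ and $cb=c$, computes those two products in block form, and reads off the four identities from the $(1,4)$, $(2,4)$ blocks of $ba$ and the $(1,3)$, $(1,4)$ blocks of $cb$. The extra remark that $ab=b$ and $bc=c$ hold automatically is a useful clarification but does not change the argument.
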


\begin{proof}
Given matrices $a$, $b$ and $c$, 
\[
ba= \begin{bmatrix}
I & 0 & b_{13} & a_{14} + b_{13}a_{34} \\
0 & I & b_{23} & a_{24}+b_{23}a_{34} \\
0 & 0 & 0 & 0 \\
0 & 0 & 0 & 0
\end{bmatrix},
\]
\[
cb= \begin{bmatrix}
I & c_{12} & b_{13}+c_{12}b_{23} & b_{14}+c_{12}b_{24} \\
0 & 0 & 0 & 0\\
0 & 0 & 0 & 0 \\
0 & 0 & 0 & 0
\end{bmatrix}.
\]
Now observe that $ba=b$ and $cb=c$ imply that the equations above hold.
\end{proof}

\begin{lemma}\label{ac}
A skew chain $A>B>C$ is strictly categorical if and only if, for all $b\in B$, there exists $b'\in B$ such that 
\begin{itemize}
\item[(i)] $b_{13}+c_{12}b'_{23}=c_{13}$
\item[(ii)] $a_{24}+b_{23}a_{34}=b'_{24}$
\item[(iii)] $a_{14}+b_{13}a_{34}=c_{14}-c_{12}b'_{24}$
\end{itemize}
where $a_{ij}\in A$, $b_{ij},b'_{ij}\in B$ and $c_{ij}\in C$.
\end{lemma}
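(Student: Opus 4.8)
The plan is to turn the abstract criterion of Proposition \ref{strictbug} into the displayed block equations by feeding it through the multiplication formulas of Lemma \ref{ab}. I fix representatives $a\in A$ and $c\in C$ in the standard block form, so that the blocks $a_{14},a_{24},a_{34}$ and $c_{12},c_{13},c_{14}$ are fixed data, and I let $b$ range over $B$. Proposition \ref{strictbug} says that strict categoricity is exactly the uniqueness of a middle element $b'$ with $a>b'>c$, so the whole task is to rewrite ``there is such a $b'$ compatible with $b$'' purely in terms of the blocks.

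First I would record how the coset data sits inside the blocks. Computing $a\wedge b\wedge a=ba$ and $c\vee b\vee c=c\circ b$ exactly as in the proof of Lemma \ref{ab} shows that the $A$-coset of $b$ in $B$ is cut out by the pair of blocks $(b_{13},b_{23})$ (these are preserved, while the $(1,4)$ and $(2,4)$ blocks move with $a$), and dually that the $C$-coset of $b$ in $B$ is cut out by $(b_{23},b_{24})$. Thus a coset bijection from an $A$-coset to a $C$-coset is determined by what it does to the $(2,3)$ block, and the composite that strict categoricity must keep nonempty is governed by matching these blocks.

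Next I would translate $a>b'>c$ by Lemma \ref{ab}: it is equivalent to the four relations $b'_{14}=a_{14}+b'_{13}a_{34}$, $b'_{24}=a_{24}+b'_{23}a_{34}$, $b'_{13}+c_{12}b'_{23}=c_{13}$ and $b'_{14}+c_{12}b'_{24}=c_{14}$. Equations (i)--(iii) are exactly these, once $b'_{14}$ is eliminated through the first relation and the blocks $b'_{13}$ (in the third relation and in the eliminated $b'_{14}$) and $b'_{23}$ (in the second relation) are replaced by the given $b_{13}$ and $b_{23}$; this replacement is precisely the demand that $b'$ be compatible with $b$ across the cosets identified above. I would then argue the equivalence in both directions: for the forward implication, strict categoricity furnishes, for each $b$, the middle element $b'$ provided by Proposition \ref{strictbug}, from which (i)--(iii) are read off; for the converse, a solution $b'\in B$ of (i)--(iii) is fed back through Lemma \ref{ab} to exhibit a nonempty composite, so that the criterion of Proposition \ref{prop_bigid} is met for every $b$ and the chain is strictly categorical.

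The step I expect to be the main obstacle is the non-commutative bookkeeping forced by the blocks $c_{12}$ and $a_{34}$, which are rectangular and in general neither invertible nor cancellable: one cannot solve (i) for $b'_{23}$ nor (ii) for $b_{23}$, so the whole argument must stay at the level of the stated identities and of membership in $B$, never inverting a block. A related delicate point is that (i)--(iii) pin down only $b'_{23}$ and $b'_{24}$, leaving $b'_{13}$ and $b'_{14}$ free; I must check that this freedom is exactly the latitude allowed by the $A$-coset/$C$-coset description above, so that a solution can always be realized by an actual element of $B$ and corresponds to a genuine coset bijection. Matching the existential form of the statement with the uniqueness form of Proposition \ref{strictbug} is the final point to handle, and I would route it through the coset-intersection reformulation of Proposition \ref{prop_bigid}.
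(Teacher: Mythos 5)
Your proposal is correct and follows essentially the same route as the paper: both arguments compute the block forms of $ba$ and $c\circ b$ to identify the $A$-coset $bA$ and the $C$-coset $C\circ b'$ inside $B$, and then translate the nonempty-intersection criterion of Proposition \ref{prop_bigid} into the block identities (i)--(iii). The initial detour through Proposition \ref{strictbug} and Lemma \ref{ab} collapses, as you anticipate at the end, into exactly the coset-intersection computation the paper performs.
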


\begin{proof}
Consider $A=E_i$, $B=E_j$ and $C=E_l$ and fix $a_{k}\in A$, $b_{k}\in B$ and $c_{k}\in C$ such that $a_{k}>b_{k}>c_{k}$ presented below:

\begin{center}
$\begin{array}{cccc}
a_{k}= \begin{bmatrix}
I & 0 & 0 & p\\
0 & I & 0 & q \\
0 & 0 & I & r \\
0 & 0 & 0 & 0
\end{bmatrix},
&
b_{k}= \begin{bmatrix}
I & 0 & w & u \\
0 & I & t & v\\
0 & 0 & 0 & 0 \\
0 & 0 & 0 & 0
\end{bmatrix}&
\text{ and  } &
c_{k}= \begin{bmatrix}
I & x & y & z \\
0 & 0 & 0 & 0 \\
0 & 0 & 0 & 0 \\
0 & 0 & 0 & 0
\end{bmatrix}.
\end{array}$
\end{center}

Given matrices $a$, $b$ and $c$, in the above block decomposition,
\[
ba= \begin{bmatrix}
I & 0 & b_{13} & a_{14} + b_{13}a_{34} \\
0 & I & b_{23} & a_{24}+b_{23}a_{34} \\
0 & 0 & 0 & 0 \\
0 & 0 & 0 & 0
\end{bmatrix},
\]
\[
c\circ b=c+b-cb=
 \begin{bmatrix}
I & 0 & c_{13}-c_{12}b_{23} & c_{14}-c_{12}b_{24} \\
0 & I & b_{23} & b_{24} \\
0 & 0 & 0 & 0 \\
0 & 0 & 0 & 0
\end{bmatrix}.
\]
so that the $A$-coset in $B$ and the $C$-coset in $B$ are given by
\begin{center}
$\begin{array}{ccc}
bA= \begin{bmatrix}
I & 0 & w & a_{14} + wa_{34} \\
0 & I & t   & a_{24}+  ta_{34} \\
0 & 0 & 0 & 0 \\
0 & 0 & 0 & 0
\end{bmatrix}
&
\text{ and  } 
&
C\circ b=
 \begin{bmatrix}
I & 0 & y-xb_{23} & z-xb_{24} \\
0 & I & b_{23} & b_{24} \\
0 & 0 & 0 & 0 \\
0 & 0 & 0 & 0
\end{bmatrix}.
\end{array}$
\end{center}
Recall that all skew lattices in rings are categorical.
Then, due to Proposition \ref{prop_bigid}, for all $b,b'\in B$, $(bA)\cap (C\circ b')\neq \emptyset$.
Thus, the above equations hold.
\end{proof}

\begin{remark}\label{catmat}
The conditions of Lemma \ref{ac} define a strictly categorical skew lattices of matrices in rings that can determine a category in the sense of Definition \ref{cosetcat}. 
A morphism in this category is determined by related $\DD$-classes $A>B$ and thus given by the set of corresponding coset bijections given in \cite{Ka07} for all matrices $a\in A$ and $b\in B$ of the block form 
\begin{center}
$\begin{array}{ccc}
a= \begin{bmatrix}
I & 0 & a_{13} \\
0 & I & a_{23} \\
0 & 0 &  0 
\end{bmatrix}
&
\text{ and  } 
&
b= \begin{bmatrix}
I & b_{12} & b_{23} \\
0 & 0  & 0 \\
0 & 0 & 0 
\end{bmatrix}.
\end{array}$
\end{center}
as the following maps:
\[
\begin{array}{ccccc}
&
\begin{bmatrix}
I & 0 & a_{13} \\
0 & I & a_{i} \\
0 & 0 &  0 
\end{bmatrix}
\rightarrow
\begin{bmatrix}
I & b_j & a_{13}+b_ja_i \\
0 & I & 0 \\
0 & 0 &  0 
\end{bmatrix}
&
\text{ and  } 
&
\begin{bmatrix}
I & 0 & 0\\
0 & I & 0 \\
a_{31} & a_i & 0 
\end{bmatrix}
\rightarrow
\begin{bmatrix}
I & 0 & 0 \\
b_j & 0 & 0 \\
a_{31}+a_ib_j & 0 &  0 
\end{bmatrix}
\end{array}
\]
\end{remark}

\begin{example}\label{nonstcatmat}
Let us now see that not all skew lattices in rings are strictly categorical.
Observe that the conditions of Lemmas \ref{ab} and \ref{ac} are very much related.
On the other hand, if $a>b,b'>c$ then $b=b'$ or else they belong to distinct cosets.
To show a counter example we will present a skew lattice with the admissible Hasse diagram of the skew lattice in the example of Figure \ref{nonstcat} with $1=a$, $0=c$, $b=2$ and $b'=3$.
To construct an example corresponding to the above diagram we have to make sure that $bb'=b'$, $b'b=b$ (which are guaranteed by assumption on their block form), $ba=b$, $b'a=b'$, $cb=c$ and $cb'=c$ implying that 
\[
\begin{tabular}{lcr}
$a_{14}+b_{13}a_{34}=b_{14}$  & & $b_{13}+c_{12}b_{23}=c_{13}$ \\
$a_{24}+b_{23}a_{34}=b_{24}$  & & $b_{14}+c_{12}b_{24}=c_{14}$ \\
$a_{14}+b'_{13}a_{34}=b'_{14}$  & & $b'_{13}+c_{12}b'_{23}=c_{13}$ \\
$a_{24}+b'_{23}a_{34}=b'_{24}$  & & $b'_{14}+c_{12}b'_{24}=c_{14}$ 
\end{tabular}
\]
Let us now consider the example where all these values are null with the exception of $b'_{23}=1$, that is,
\begin{center}
$\begin{array}{ccccc}
a= \begin{bmatrix}
1 & 0 & 0 & 0 \\
0 & 1 & 0 & 0 \\
0 & 0 & 1 & 0 \\
0 & 0 & 0 & 0
\end{bmatrix},
&
b= \begin{bmatrix}
1 & 0 & 0 & 0 \\
0 & 1 & 0 & 0\\
0 & 0 & 0 & 0 \\
0 & 0 & 0 & 0
\end{bmatrix}&
b'= \begin{bmatrix}
1 & 0 & 0 & 0 \\
0 & 1 & 1 & 0 \\
0 & 0 & 0 & 0 \\
0 & 0 & 0 & 0
\end{bmatrix}&
\text{ and  } &
c= \begin{bmatrix}
1 & 0 & 0 & 0 \\
0 & 0 & 0 & 0 \\
0 & 0 & 0 & 0 \\
0 & 0 & 0 & 0
\end{bmatrix}.
\end{array}$
\end{center}
\end{example}

\begin{example}
An example of a skew lattice of matrices in a ring $\mathbf R$ corresponding to the non normal but strictly categorical skew chain of Example \ref{nonorm} above can be given by the subskew lattice $\set{1,b,b'}$ of the skew lattice of Example \ref{nonstcatmat}.
\end{example}

\begin{proposition}\label{nrmat}
Let $\mathbf S$ be a skew lattice $A>B$ and let $a,u\in A$ and $b,v\in B$ such that

\begin{center}
$\begin{array}{ccccc}
a= \begin{bmatrix}
I & 0 & a_{13} \\
0 & I & a_{23} \\
0 & 0 &  0 
\end{bmatrix},
&
v= \begin{bmatrix}
I & x & y  \\
0 & 0 & 0\\
0 & 0 & 0 
\end{bmatrix}
&
b= \begin{bmatrix}
I & b_{12} & b_{23} \\
0 & 0  & 0 \\
0 & 0 & 0 
\end{bmatrix}
&
\text{ and  } 
&
u= \begin{bmatrix}
I & 0 & z  \\
0 & I & w  \\
0 & 0 & 0 
\end{bmatrix}.
\end{array}$
\end{center}
Then, 
\begin{itemize}
\item[(i)] $\mathbf S$ is normal iff for all $v'\Drel v$, $x=x'$ 
\item[(ii)] $\mathbf S$ is conormal iff for all $u'\Drel u$, $w=w'$  
\end{itemize}
\end{proposition}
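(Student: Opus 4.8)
The plan is to reduce each equivalence to a pointwise matrix identity via the coset characterizations already available, and then to read off the surviving blocks by direct multiplication.

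For (i) I would begin with Proposition \ref{cs_normal}: normality of $\mathbf S$ is equivalent to $B$ being a single $A$-coset, i.e. $A\wedge v\wedge A=A\wedge v'\wedge A$ for all $v,v'\in B$. By Proposition \ref{strg_prop} this coset equality is in turn equivalent to the sandwiched condition $a\wedge v\wedge a=a\wedge v'\wedge a$ for every $a\in A$. So normality becomes $ava=av'a$ for all $a\in A$ and all $v'\Drel v$ (recall $v'\Drel v$ just means $v'\in B$). The computation is then routine: the block shapes give the absorption $av=v$, whence
\[
ava=va=\begin{bmatrix} I & x & a_{13}+xa_{23}\\ 0 & 0 & 0\\ 0 & 0 & 0\end{bmatrix}.
\]
The decisive observation is that the $(1,3)$-block $y$ of $v$ does not survive, so $ava$ depends on $v$ only through its $(1,2)$-block $x$. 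Hence $ava=av'a$ for all $a$ holds precisely when $x=x'$, which is (i).

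For (ii) I would dualize. The dual of Proposition \ref{cs_normal} makes conormality equivalent to $B\vee u\vee B=B\vee u'\vee B$ for all $u,u'\in A$, and the dual of Proposition \ref{strg_prop} reduces this to $b\vee u\vee b=b\vee u'\vee b$ for every $b\in B$. Since the standard form is the right-handed one, $\vee$ agrees with $\circ$, where $p\circ q=p+q-pq$. Two absorptions again collapse the triple product: $ub=b$ yields $u\circ b=u$, and writing $p=b\circ u\in A$ one has $pb=b$, so $p\circ b=p$; together these give $b\vee u\vee b=b\circ u$. A direct computation then gives
\[
b\circ u=b+u-bu=\begin{bmatrix} I & 0 & b_{23}-b_{12}w\\ 0 & I & w\\ 0 & 0 & 0\end{bmatrix},
\]
which depends on $u$ only through its $(2,3)$-block $w$ (the block $z$ cancels). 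Therefore $b\vee u\vee b=b\vee u'\vee b$ for all $b$ exactly when $w=w'$, which is (ii).

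The matrix arithmetic is entirely routine; the only genuine care needed is organizational. One must apply the reduction on the correct operation-side --- Proposition \ref{cs_normal} together with Proposition \ref{strg_prop} for the $\wedge$-sandwich in (i), and their duals for the $\vee$-sandwich in (ii) --- and one must legitimately invoke the right-handed identification $\vee=\circ$ before simplifying $b\vee u\vee b$ to $b\circ u$. The conceptual crux, and the thing worth stating cleanly, is the pair of absorption identities $av=v$ and $ub=b$ (with its consequence $u\circ b=u$), which are exactly what make the triple sandwiches collapse to the single products whose blocks can be inspected.
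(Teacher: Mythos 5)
Your proof is correct and follows essentially the same route as the paper, whose entire argument is just the two block computations $vA$ and $B\circ u$ (equivalently your $ava=va$ and $b\vee u\vee b=b\circ u$) together with the observation that only the $x$-block, respectively the $w$-block, survives. You merely make explicit the reduction steps via Proposition \ref{cs_normal} and Proposition \ref{strg_prop} that the paper leaves implicit, which is a welcome clarification rather than a different method.
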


\begin{proof}

Just observe that

\begin{center}
$\begin{array}{cc}
vA= \begin{bmatrix}
I & x & a_{13}+xa_{23} \\
0 & 0 & 0 \\
0 & 0 &  0 
\end{bmatrix},
&
B\circ u= \begin{bmatrix}
I & 0 & b_{13}-b_{12}w  \\
0 & I & w\\
0 & 0 & 0 
\end{bmatrix}
\end{array}$
\end{center}
\end{proof}

\begin{remark}
Skew lattices in rings are distributive, symmetric and categorical (cf. \cite{Le89} and \cite{Le93}). 
It is well known that $\wedge$-distributive skew lattices are exactly the skew lattices that are simultaneously symmetric and normal for which the lattice image $S/\DD$ is distributive (cf. \cite{Le92}). 
Hence, whenever $S/\DD$ is distributive, the conditions of Proposition \ref{nrmat} determine the skew lattices of matrices in rings that are $\wedge$-distributive.
\end{remark}


\section*{Acknowledgements}

We would like to thank to J. Leech that, on the later preparation of the author's Ph.D. thesis, posed the central question to this note, to K. Cvetko-Vah that carefully followed the research done to achieve the counter-example exhibited, to Andrej Bauer for the discussions subsequent to the presentation of this topic at his research seminar in Ljubljana, to Maria Jo\~ ao Gouveia for the challenging comments also during her seminar in Lisbon, and to Dijana Cerovski for all her support.  


\end{document}